\newtheorem{theorem}{Theorem}[section]
\newtheorem{lemma}[theorem]{Lemma}
\newtheorem{proposition}[theorem]{Proposition}
\numberwithin{equation}{section}
\newcommand{\q}{\mathbb{Q}}
\newcommand{\z}{\mathbb{Z}}
\newcommand{\zpq}{\mathbb{Z}_p[[Q]]}
\begin{document}

\author{Dessislava H. Kochloukova\thanks{The first author was partially supported by grant CNPq 305457/2021-7 and FAPESP 2024/14914-9.} ~\&~
Aline G.S. Pinto\thanks{ The second author was partially supported by FAPDF.}\\
State University of Campinas (UNICAMP), \\  Campinas 13083-859, Brazil  \\
Department of Mathematics,~University of Bras\'\i lia,\\ 70910-900
Bras\'\i lia DF,~Brazil}

\title{Homological growth of nilpotent-by-abelian pro-p groups}

\date{}

\maketitle

\begin{abstract}
We show that the torsion-free rank of $H_i(M, \mathbb{Z}_p)$ has finite upper bound for $i \leq m$, where $M$ runs through the pro-$p$ subgroups of finite index in a pro-$p$ group $G$ that is (nilpotent of class $c$)-by-abelian  such that $ G/N'$ is of type $FP_{2cm}$.

\end{abstract}

\bigskip

 \noindent{\small  {Keywords:} pro-$p$ groups; nilpotent-by-abelian; homology; growth; rank}

\noindent{\small {AMS subject classification:}  20J05, 20E18 }

\section{Introduction}
 In this paper we study how the rank $$ \mathrm{rk} ~ H_i(M,\mathbb{Z}_p) = dim_{\mathbb{Q}_p}  H_i(M, \mathbb{Z}_p) \otimes_{\mathbb{Z}_p} \mathbb{Q}_p$$ varies when $M$ runs through the set of all pro-$p$ subgroups of finite index in a fixed pro-$p$ group $G$ that is nilpotent-by-abelian. The case when $G$ is central-by-metabelian was previously resolved in \cite{KP-centre-by-metabelian} and was inspired by \cite{K-M} where the case of abstract groups that are abelian-by-polycyclic was considered.  The case when $G$ is  a  finitely presented pro-$p$ group, that is nilpotent-by-abelian and $i = 1$  was considered by Bridson and Kochloukova in \cite{B-K}. 
  In order the rank to have finite upper limit we need some strong homological conditions on the group $G$. We say that a pro-$p$ group $G$ is of type $FP_k$ if the trivial $\mathbb{Z}_p[[G]]$-module $\mathbb{Z}_p$ has a projective resolution with all projectives finitely generated in dimensions $ \leq k$. This is equivalent with all homology groups $H_i(G, \mathbb{F}_p)$ being finite for $ i \leq k$. Property $FP_2$ for pro-$p$ groups is equivalent with finite presentability. 

The growth of homologies in subgroups of finite index in pro-$p$ groups was earlier considered by Kochloukova and Zalesski in  \cite{K-Z2} for a special class $\mathcal{L}$ of pro-$p$ groups that are of type $FP_{\infty}$. A related problem is the study of the torsion of the abelianization of a pro-$p$ group i.e. the torsion of the first homology group. The growth of the torsion-rank of the first homology group was considered by Nikolov in \cite{Ni}.

 In general calculating homology or cohomology groups of a profinite group is not an easy task. There is better understanding in the case of special classes of groups, as $p$-adic analytic groups, as shown in \cite{S-W}. 
Furthermore little is known about finitely presented soluble pro-$p$ groups. In \cite{King} King described finitely presented metabelian pro-$p$ groups. This was later generalised by Kochloukova in \cite{Desi1} for metabelian groups of type $FP_{m}$ for $m \geq 3$. King's description of finite presentability used a specific invariant $\Delta$ that was later generalised by Kochloukova and Zalesskii in \cite{K-Z}.

 The following is our main result. It is motivated by the main result of \cite{Mirzaii-Mokari} where the case of abstract groups is considered. Our assumptions of the condition $FP_{2d}$ imposed  on the group $G/ N'$   are slightly different than the ones imposed in the abstract case in \cite{Mirzaii-Mokari} as  we know in the pro-$p$ case that the condition that $G/ N'$  is $FP_{2d}$ is equivalent to $\widehat{\otimes}_{\mathbb{Z}_p}^{2d} N/ N'$ is finitely generated as a $\mathbb{Z}_p [[Q]]$-module via the diagonal $Q$-action and we do not know whether a  pro-$p$ version  of a result of \cite{A} used in \cite{Mirzaii-Mokari} holds.
 
\begin{theorem}\label{principal}
Let $ 1\to N \to G \to Q \to 1$ be an short exact sequence of pro-$p$ groups, where $G$ is finitely generated, $N$ is nilpotent of class $c$ and $Q$ is abelian. Let $N'$ be the commutator subgroup of $N$ and suppose that the metabelian quotient $G/N'$ of $G$ is of type $FP_{2d}$, where $d = cm$. Then 
$$
\sup_{M\in \mathcal{A}} \mathrm{rk} ~ H_i(M,\mathbb{Z}_p) < \infty, ~~~~ \forall ~ 0\leq i \leq m ,
$$
where $\mathcal{A}$ is the set of all subgroups of $p$-power index in $G$ and, for an abelian pro-$p$ group $B$, rk$B:=\dim_{\mathbb{Q}_p} B\otimes_{\mathbb{Z}_p}\mathbb{Q}_p$ is the torsion-free rank of $B$.

\end{theorem}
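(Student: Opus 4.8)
The strategy is to reduce the homological growth question for the nilpotent-by-abelian group $G$ to the already-understood metabelian situation $G/N'$, at the cost of passing through the lower central series of $N$. Let $M$ run over the finite-index pro-$p$ subgroups of $G$, write $N_M = N \cap M$ and $Q_M = M/N_M$, so that $Q_M$ is a finite-index subgroup of $Q$ and $N_M$ is nilpotent of class $\leq c$. The central idea is that the graded pieces $\gamma_j(N_M)/\gamma_{j+1}(N_M)$ of the lower central series of $N_M$ are controlled — as $\mathbb{Z}_p[[Q_M]]$-modules — by the abelianization $N_M/N_M'$, via iterated applications of the natural surjections $\widehat{\otimes}^j_{\mathbb{Z}_p}(N_M/N_M') \twoheadrightarrow \gamma_j(N_M)/\gamma_{j+1}(N_M)$ coming from the commutator map (with the diagonal $Q_M$-action on the left). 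Thus each $\gamma_j(N_M)/\gamma_{j+1}(N_M)$ is a subquotient of a module built from $N/N'$, and finite generation of $\widehat{\otimes}^{2d}_{\mathbb{Z}_p}(N/N')$ over $\mathbb{Z}_p[[Q]]$ — which the excerpt tells us is equivalent to $G/N'$ being $FP_{2d}$ — is exactly the hypothesis that feeds these tensor powers.

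First I would set up the Lyndon–Hochschild–Serre (LHS) spectral sequence for the extension $1 \to N_M \to M \to Q_M \to 1$,
\[
E^2_{s,t} = H_s\!\left(Q_M, H_t(N_M, \mathbb{Z}_p)\right) \Longrightarrow H_{s+t}(M, \mathbb{Z}_p),
\]
and bound the torsion-free rank of $H_i(M,\mathbb{Z}_p)$ by $\sum_{s+t=i} \mathrm{rk}\, E^2_{s,t}$, hence by $\sum_{s+t \leq i}\mathrm{rk}\, H_s(Q_M, H_t(N_M,\mathbb{Z}_p))$. Since $Q_M$ is a finitely generated abelian pro-$p$ group of bounded rank (bounded by $\mathrm{rk}\, Q$ plus a constant depending only on the number of generators of $G$ — here one uses that $Q$ is finitely generated abelian, so $H_s(Q_M,-)$ is a ``free-by-torsion'' functor whose output rank is at most $\binom{\mathrm{rk}\, Q_M}{s}$ times the rank of the coefficients), it suffices to prove that $\mathrm{rk}\, H_t(N_M, \mathbb{Z}_p)$ is bounded independently of $M$, for $t \leq m$. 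This is where the nilpotency is used: I would filter $N_M$ by its lower central series and, running the spectral sequences of the successive central extensions $1 \to \gamma_{j+1}(N_M)/\gamma_{j+2}(N_M) \to N_M/\gamma_{j+2}(N_M) \to N_M/\gamma_{j+1}(N_M) \to 1$ — or more efficiently invoking whatever ``nilpotent implies homology computed from the abelianization'' machinery is available in the pro-$p$ setting — reduce $H_t(N_M, \mathbb{Z}_p)$ to a subquotient of a sum of terms of the form $H_{t_1}(\text{graded piece}) \otimes \cdots$, ultimately of things like $\widehat{\bigwedge}^{a}$ and $\widehat{\otimes}^{b}$ of the abelian groups $\gamma_j(N_M)/\gamma_{j+1}(N_M)$ with $a+b$ controlled by $t \leq m$ and $j \leq c$, so that the total tensor degree in $N_M/N_M'$ never exceeds $cm = d$ (the factor $2d$ rather than $d$ in the hypothesis absorbs the LHS step over $Q_M$, which can double the required tensor degree).

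The remaining, and genuinely load-bearing, step is the uniform bound: I must show that if $A$ is a finitely generated module over $\mathbb{Z}_p[[Q]]$ (here $A = \widehat{\otimes}^{k}_{\mathbb{Z}_p}(N/N')$ with $k \leq 2d$), then $\sup_{Q_M} \mathrm{rk}\, H_s(Q_M, A\text{-restricted-and-coinduced-appropriately})$ is finite for $s$ in the relevant range. Concretely, restricting $A$ to the finite-index subalgebra $\mathbb{Z}_p[[Q_M]] \subseteq \mathbb{Z}_p[[Q]]$ keeps it finitely generated, and for a finitely generated module over the Noetherian local ring $\mathbb{Z}_p[[Q_M]] \cong \mathbb{Z}_p[[x_1,\dots,x_r]]$ one controls $\mathrm{rk}\, H_s(Q_M, -) = \mathrm{rk}\, \mathrm{Tor}^{\mathbb{Z}_p[[Q_M]]}_s(\mathbb{Z}_p, -)$ via a finite free resolution whose ranks are bounded using the projection formula / Shapiro's lemma to compare resolutions over $\mathbb{Z}_p[[Q]]$ and $\mathbb{Z}_p[[Q_M]]$. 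The crux — and the hardest part — is making the bound \emph{independent of $M$}: the naive bound from inducing up a $\mathbb{Z}_p[[Q]]$-resolution grows with the index $[Q:Q_M]$, so one needs the sharper observation (the pro-$p$ analogue of the key lemma behind the metabelian and central-by-metabelian cases in \cite{Desi1,KP-centre-by-metabelian}) that for a finitely generated $\mathbb{Z}_p[[Q]]$-module $A$ the rank of $\mathrm{Tor}^{\mathbb{Z}_p[[Q_M]]}_s(\mathbb{Z}_p, A)$ stays bounded as $Q_M$ shrinks — essentially because $A\otimes_{\mathbb{Z}_p[[Q_M]]}\mathbb{Q}_p$ and the higher Tor's are governed by the associated primes and multiplicities of $A$, which are invariant under such restriction up to a uniformly bounded factor. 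I expect verifying this uniform-boundedness statement in the pro-$p$ / Iwasawa-algebra language, and correctly matching the numerology $d=cm$, $2d$ with the two spectral sequences, to be where the real work lies; everything else is bookkeeping with standard properties of the LHS spectral sequence and of completed tensor/exterior powers.
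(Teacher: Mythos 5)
Your overall skeleton (LHS for $1\to N_M\to M\to Q_M\to 1$, filter $N_M$ by its lower central series, feed the $FP_{2d}$ hypothesis through Theorem D of \cite{Desi1} to get finite generation of $\widehat{\bigotimes}^s_{\mathbb{Z}_p} N/N'$) matches the paper's strategy, but there is a genuine gap at the pivotal reduction step, and two supporting pieces of machinery that the paper builds are missing.

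The gap: you claim that because $\mathrm{rk}\,H_s(Q_M,V)\leq\binom{\mathrm{rk}\,Q_M}{s}\cdot\mathrm{rk}\,V$, it suffices to bound $\mathrm{rk}\,H_t(N_M,\mathbb{Z}_p)$ (the torsion-free rank as a $\mathbb{Z}_p$-module) uniformly in $M$. That rank is typically \emph{infinite}: $N$ is an infinitely generated nilpotent normal subgroup of a finitely generated (non-$p$-adic-analytic) pro-$p$ group, and already $H_1(N,\mathbb{Z}_p)=N/N'$ has infinite $\mathbb{Z}_p$-rank in the basic examples. The finiteness one actually needs is at the level of $H_r(Q^{p^t},H_t(N,\mathbb{Z}_p))\otimes\mathbb{Q}_p$, and it comes from the $\mathbb{Z}_p[[Q]]$-module structure on $H_t(N,\mathbb{Z}_p)$, not from any $\mathbb{Z}_p$-rank bound. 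The paper's route is: show (Theorem 5.1) that $H_j(N,\mathbb{Z}_p)$ has a natural filtration by $\mathbb{Z}_p[[Q]]$-modules whose quotients, after $\otimes\mathbb{Q}_p$, are subquotients of $\widehat{\bigotimes}^s_{\mathbb{Z}_p}N/N'$ for $s\le cj$; then feed these finitely generated $\mathbb{Z}_p[[Q]]$-modules into Lemma 2.3 (the ``$2m$ to $m$'' lemma, which is also where the factor $2$ in $2d$ actually enters — it is needed so that $B\widehat{\otimes}_{\mathbb{Z}_p}B$ is finitely generated, not, as you suggest, to absorb a second LHS step), Lemma 2.5, and Theorem 2.4 (the ``shift dimension'' result lifting a bound on $-\otimes_{\mathbb{Z}_p[[Q^{p^t}]]}\mathbb{Q}_p$ to all $H_i(Q^{p^t},-)$). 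You correctly flag this uniform-over-$Q_M$ bound as the crux, but the appeal to ``associated primes and multiplicities invariant under restriction'' is not a proof; that is precisely the content of Theorem 5.5 of \cite{KP-centre-by-metabelian}, which the paper imports.

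You are also missing the $T$-map step. Your filtration is of $N_M=N\cap M$, which varies with $M$, so even the finite generation of $\widehat{\bigotimes}^s_{\mathbb{Z}_p}N/N'$ does not directly apply. The paper develops a ``$T$-map'' calculus (Section 3, culminating in Proposition 3.6) showing that for a nilpotent pro-$p$ group, the inclusion of a finite-index subgroup induces isomorphisms on $H_*(-,\mathbb{Z}_p)\otimes\mathbb{Q}_p$, which lets one replace $H_*(N_M,\mathbb{Z}_p)$ by $H_*(N,\mathbb{Z}_p)$ uniformly and work with the \emph{fixed} group $N$ from then on. Some argument of this type (or a carefully uniform version of yours) is indispensable; without it the $M$-dependence never goes away.
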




\section{Preliminaries}

For preliminaries on homology of profinite groups we refer the reader to \cite{book}.


\begin{lemma}[{\cite[Lemma 4.1]{KP-centre-by-metabelian}}]\label{abelian}  Let $A$ be an abelian pro-$p$ group. Then

    \begin{itemize}
    
	\item[a)] $H_i(A, \mathbb{Z}_p) \otimes_{\mathbb{Z}_p} {\mathbb{Q}_p} \simeq (\widehat{\bigwedge}^i_{\mathbb{Z}_p} A)  \otimes_{\mathbb{Z}_p} {\mathbb{Q}_p}  \hbox{ for all } i \geq 1;
$

\item[b)] if $Q$ is a finitely generated pro-$p$ abelian group and $A$ a finitely generated, pro-$p$ $\mathbb{Z}_p[[Q]]$-module we have

$
H_i(Q, H_j(A, \mathbb{Z}_p)) \otimes_{\z_p} \q_p \simeq 
H_i(Q, \widehat{\bigwedge}_{\z_p}^j A) \otimes_{\z_p} \q_p \hbox{ for }  i \geq 0, j \geq 1.
$
    \end{itemize}
\end{lemma}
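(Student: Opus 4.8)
\textbf{Proof plan for Lemma~\ref{abelian}.}

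The plan is to reduce both statements to computations with the Koszul-type (exterior algebra) resolution of the trivial module over a completed group algebra, after tensoring with $\mathbb{Q}_p$ to kill all torsion phenomena. For part (a), I would first treat the case of a finitely generated abelian pro-$p$ group $A$. Such an $A$ is a finite direct product of cyclic pro-$p$ groups; writing $A = \z_p^r \times T$ with $T$ finite, one has $H_i(A,\z_p) \otimes \mathbb{Q}_p \simeq H_i(\z_p^r, \z_p)\otimes\mathbb{Q}_p$ by the K\"unneth formula, since the homology of a finite $p$-group with $\z_p$-coefficients is finite in positive degrees (so dies after $\otimes\mathbb{Q}_p$) and is $\z_p$ in degree $0$. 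For $A = \z_p^r$ the homology is computed from the Koszul complex $\widehat{\bigwedge}^\bullet_{\z_p}\z_p^r$ with zero differentials, giving $H_i(\z_p^r,\z_p) \simeq \widehat{\bigwedge}^i_{\z_p}\z_p^r$, which matches $\widehat{\bigwedge}^i_{\z_p}A \otimes \mathbb{Q}_p$ after inverting $p$ (the exterior powers of the torsion part $T$ contribute only torsion). For general (not necessarily finitely generated) abelian pro-$p$ $A$, I would write $A = \varprojlim A/U$ over open subgroups $U$, use continuity of homology of profinite groups (homology commutes with inverse limits of the coefficient system here, or rather one passes to the limit over the finite quotients), and the fact that $\widehat{\bigwedge}^i$ and $\otimes\mathbb{Q}_p$ also commute with the relevant inverse limits, to bootstrap from the finitely generated case.

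For part (b), with $Q$ finitely generated abelian pro-$p$ and $A$ a finitely generated pro-$p$ $\z_p[[Q]]$-module, I would use the Lyndon--Hochschild--Serre-type spectral sequence, or more directly argue as follows. The key point is that $H_i(Q,-)$ is a $\mathbb{Q}_p$-linear exact functor after tensoring with $\mathbb{Q}_p$ in the sense that matters: if $X \to Y$ is a morphism of profinite $\z_p[[Q]]$-modules whose kernel and cokernel are killed by $\otimes\mathbb{Q}_p$ (e.g. are finite, or more generally torsion and bounded), then $H_i(Q,X)\otimes\mathbb{Q}_p \to H_i(Q,Y)\otimes\mathbb{Q}_p$ is an isomorphism for all $i$ --- this follows from the long exact sequence in homology and the fact that $H_i(Q,Z)$ is a finitely generated $\z_p$-module annihilated by a power of $p$ whenever $Z$ is (since $Q$ is of type $FP_\infty$, being finitely generated abelian pro-$p$, so the homology of a finite module is finite). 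By part (a) applied to $A$ as an abelian pro-$p$ group, the natural map $\widehat{\bigwedge}^j_{\z_p}A \to H_j(A,\z_p)$ (edge map / product in homology) has kernel and cokernel that vanish after $\otimes\mathbb{Q}_p$; moreover this map is $Q$-equivariant for the diagonal action, since the exterior power carries the diagonal action and the product in homology is natural. Applying $H_i(Q,-)$ and the observation above yields the claimed isomorphism $H_i(Q,H_j(A,\z_p))\otimes\mathbb{Q}_p \simeq H_i(Q,\widehat{\bigwedge}^j_{\z_p}A)\otimes\mathbb{Q}_p$.

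The main obstacle I anticipate is bookkeeping around \emph{finiteness and torsion control in the non-finitely-generated setting}: in part (a) one must be careful that $H_i(A,\z_p)$ need not be finitely generated over $\z_p$ and that $\widehat{\bigwedge}^i_{\z_p}A$ is the \emph{completed} exterior power, so one needs the comparison map to be well defined and compatible with the inverse limit presentation $A=\varprojlim A/U$, and one needs that the torsion that appears at each finite stage does not accumulate in a way that survives $\otimes\mathbb{Q}_p$ in the limit. A clean way around this is to observe that both sides are (continuous functors) applied to $A$ and agree on the cofinal system of finitely generated quotients, then invoke exactness of $-\otimes_{\z_p}\mathbb{Q}_p$ and continuity. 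In part (b) the analogous subtlety is ensuring that $H_i(Q,-)$ commutes with the relevant limits and that ``torsion and bounded'' is preserved; here the finite generation of $A$ over $\z_p[[Q]]$ and of $Q$ (hence $Q$ of type $FP_\infty$) is exactly what makes the homology modules finitely generated over $\z_p$ and bounded torsion when the coefficient module is, so the long-exact-sequence argument goes through. The rest is routine homological algebra, so I would not grind through it.
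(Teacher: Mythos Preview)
The paper does not actually prove this lemma: it is quoted verbatim as \cite[Lemma~4.1]{KP-centre-by-metabelian} with no argument supplied, so there is no ``paper's own proof'' to compare your proposal against. Your sketch is therefore being assessed on its own merits.

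Your overall strategy is sound and is, in fact, very close in spirit to the machinery the present paper develops later in Section~3. What you call ``a morphism whose kernel and cokernel are killed by $\otimes\,\mathbb{Q}_p$'' is precisely what the authors call a \emph{$T$-map} (Lemma~\ref{criterion}), and the principle ``apply $H_i(Q,-)$ and use the long exact sequence'' is exactly their Lemma~\ref{hom-chain}. So your route for part~(b) is essentially the one the paper uses elsewhere, just phrased differently.

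There is one imprecision worth flagging in your part~(b). You justify that $H_i(Q,Z)\otimes\mathbb{Q}_p=0$ by saying ``$H_i(Q,Z)$ is a finitely generated $\mathbb{Z}_p$-module annihilated by a power of $p$ whenever $Z$ is''. But the $Z$ you must apply this to --- the kernel and cokernel of the comparison map $\widehat{\bigwedge}^j_{\mathbb{Z}_p}A\to H_j(A,\mathbb{Z}_p)$ --- is not known to be finitely generated over $\mathbb{Z}_p$, nor of bounded exponent; you only know it is pointwise $\mathbb{Z}_p$-torsion. The fix is easy and you already have the right ingredient: since $Q$ is finitely generated abelian pro-$p$, it is of type $FP_\infty$, so $H_i(Q,Z)$ is a subquotient of a finite direct sum $Z^{n_i}$, hence pointwise torsion whenever $Z$ is, and therefore $H_i(Q,Z)\otimes_{\mathbb{Z}_p}\mathbb{Q}_p=0$. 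No bounded-exponent hypothesis is needed. With that correction your argument for (b) is complete.

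For part~(a), your reduction to the finitely generated case and then to $\mathbb{Z}_p^r$ via K\"unneth is the standard route and is correct. Your own caveat about the passage to the inverse limit in the non--finitely-generated case is the only genuinely delicate point; it can be handled, but you are right that it requires care (this is where the ``completed'' in $\widehat{\bigwedge}$ earns its keep). Since the lemma is imported from \cite{KP-centre-by-metabelian}, one would normally just cite that reference rather than rederive the limit argument.
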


\medskip

\begin{lemma}[{\cite[Lemma 4.2]{KP-centre-by-metabelian}}] \label{finitesubgroup}
Let $G$ be a pro-$p$ group, $G_0$ a pro-$p$ open, normal, subgroup in $G$ and $V$ a pro-$p$ $\mathbb{Z}_p[[G]]$-module. Then
$$
H_n(G, V) \otimes_{\mathbb{Z}_p} {\mathbb{Q}_p}  \simeq H_0(G/G_0, H_n(G_0, V)) \otimes_{\mathbb{Z}_p} {\mathbb{Q}_p}. 
$$
\end{lemma}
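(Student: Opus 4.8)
The plan is to feed the extension $1\to G_0\to G\to \bar G\to 1$, where $\bar G:=G/G_0$ is a finite $p$-group, into the Lyndon--Hochschild--Serre spectral sequence for homology and then observe that rationalizing kills every column except the zeroth, since the positive-degree homology of a finite group is torsion.

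First, recall the homological LHS spectral sequence in the profinite setting: for a profinite $\mathbb{Z}_p[[G]]$-module $V$ there is a first-quadrant spectral sequence
$$
E^2_{pq}=H_p\big(\bar G,\,H_q(G_0,V)\big)\ \Longrightarrow\ H_{p+q}(G,V),
$$
where $H_q(G_0,V)$ carries the conjugation action of $\bar G=G/G_0$ (here $\bar G$ is finite because $G_0$ is open in the pro-$p$ group $G$, so each $H_q(G_0,V)$ is an ordinary $\mathbb{Z}_p[\bar G]$-module); see the references on homology of profinite groups. Now tensor the whole spectral sequence with $\mathbb{Q}_p$ over $\mathbb{Z}_p$: since $\mathbb{Q}_p$ is flat over $\mathbb{Z}_p$, this is exact, commutes with homology and with the filtration on the abutment, so $E^2_{pq}\otimes_{\mathbb{Z}_p}\mathbb{Q}_p$ is again a spectral sequence converging to $H_{p+q}(G,V)\otimes_{\mathbb{Z}_p}\mathbb{Q}_p$. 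The standard transfer estimate for the finite group $\bar G$ (the composite of the inclusion-induced map and the homology transfer is multiplication by $|\bar G|$, and it factors through $H_p(1,-)=0$ for $p\ge 1$) shows $H_p(\bar G,M)$ is annihilated by $|\bar G|$ for every $\mathbb{Z}_p[\bar G]$-module $M$ and every $p\ge 1$; hence $H_p(\bar G,M)\otimes_{\mathbb{Z}_p}\mathbb{Q}_p=0$ for $p\ge 1$. Taking $M=H_q(G_0,V)$ gives $E^2_{pq}\otimes_{\mathbb{Z}_p}\mathbb{Q}_p=0$ for all $p\ge 1$, $q\ge 0$, so the rationalized spectral sequence lives in the single column $p=0$.

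A first-quadrant spectral sequence concentrated in one column degenerates at $E^2$: on each page $r\ge 2$ a differential out of column $0$ lands in column $-r<0$ and a differential into column $0$ originates from column $r\ge 2$, and both targets/sources vanish; hence $E^2_{0n}\otimes\mathbb{Q}_p=E^\infty_{0n}\otimes\mathbb{Q}_p$, and the filtration of the abutment $H_n(G,V)\otimes_{\mathbb{Z}_p}\mathbb{Q}_p$ has $E^\infty_{0n}\otimes\mathbb{Q}_p$ as its only possibly nonzero graded piece. Therefore
$$
H_n(G,V)\otimes_{\mathbb{Z}_p}\mathbb{Q}_p\ \simeq\ E^2_{0n}\otimes_{\mathbb{Z}_p}\mathbb{Q}_p\ =\ H_0\big(G/G_0,\,H_n(G_0,V)\big)\otimes_{\mathbb{Z}_p}\mathbb{Q}_p,
$$
as claimed. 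The only point that needs a little care is the availability of the homological LHS spectral sequence for profinite groups with profinite coefficients and its compatibility with $-\otimes_{\mathbb{Z}_p}\mathbb{Q}_p$; both are standard. One could also bypass the spectral sequence entirely by working directly with the inclusion-induced map $H_n(G_0,V)\to H_n(G,V)$ and the homology transfer $H_n(G,V)\to H_n(G_0,V)$, whose composites are multiplication by $|\bar G|$ and the norm element of $\mathbb{Z}_p[\bar G]$, so that after inverting $p$ the first map identifies $H_n(G,V)\otimes_{\mathbb{Z}_p}\mathbb{Q}_p$ with the $\bar G$-coinvariants of $H_n(G_0,V)\otimes_{\mathbb{Z}_p}\mathbb{Q}_p$.
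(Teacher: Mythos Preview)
Your argument is correct. The present paper does not give its own proof of this lemma but cites it from \cite{KP-centre-by-metabelian}; however, in the proof of Lemma~\ref{T-map abelian} the authors remark that the proof there ``uses spectral sequence'', which is exactly your approach via the rationalized Lyndon--Hochschild--Serre spectral sequence and the fact that higher homology of the finite quotient $G/G_0$ is $|G/G_0|$-torsion.
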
 

\medskip

\begin{lemma}[{\cite[Lemma 5.1]{KP-centre-by-metabelian}}] \label{2m to m} Let $Q$ be a finitely generated abelian pro-$p$ group and $B$ a finitely generated pro-$p$ $\zpq$-module such that $B\widehat{\otimes}_{\z_p}B$ is a finitely generated pro-$p$ $\zpq$-module via the diagonal $Q$-action. Then 
$$
\sup_{M\in\mathcal{A}} \dim_{\q_p} B \otimes_{\mathbb{Z}_p[[M]]}\q_p <\infty,
$$
where $\mathcal{A}$ is the set of all subgroups of $p$-power index in $G = B \rtimes Q$ and we view $B$ as $\mathbb{Z}_p[[G]]$-module via the canonical epimorphism $G \to Q$.
\end{lemma}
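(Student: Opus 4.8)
The plan is to reduce to the cofinal family of open normal subgroups of $G$, run the Lyndon--Hochschild--Serre spectral sequence, use the lower central series of $N$ to express the relevant homology through tensor powers of $N/N'$, and then invoke Lemma~\ref{2m to m}. \textbf{Reduction.} Every $M\in\mathcal{A}$ contains an open normal subgroup $G_0$ of $G$, and by Lemma~\ref{finitesubgroup} $H_i(M,\mathbb{Z}_p)\otimes_{\mathbb{Z}_p}\mathbb{Q}_p\cong H_0(M/G_0,H_i(G_0,\mathbb{Z}_p))\otimes_{\mathbb{Z}_p}\mathbb{Q}_p$, which is a quotient $\mathbb{Q}_p$-vector space of $H_i(G_0,\mathbb{Z}_p)\otimes_{\mathbb{Z}_p}\mathbb{Q}_p$ (coinvariants under a finite $p$-group). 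Hence it suffices to bound $\mathrm{rk}\,H_i(G_0,\mathbb{Z}_p)$ uniformly over open normal $G_0\trianglelefteq G$. For such $G_0$ set $N_0=G_0\cap N$ and $Q_0=G_0N/N\leq Q$, giving a short exact sequence $1\to N_0\to G_0\to Q_0\to1$ with $N_0$ nilpotent of class $\leq c$ and $Q_0$ abelian of finite index in $Q$. From the spectral sequence $E^2_{pq}=H_p(Q_0,H_q(N_0,\mathbb{Z}_p))\Rightarrow H_{p+q}(G_0,\mathbb{Z}_p)$ one has $\mathrm{rk}\,H_i(G_0,\mathbb{Z}_p)\leq\sum_{p+q=i}\mathrm{rk}\,E^2_{pq}$, so the task is to bound $\mathrm{rk}\,H_p(Q_0,H_q(N_0,\mathbb{Z}_p))$ uniformly for $p+q\leq m$.

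\textbf{Unwinding the nilpotent part.} Let $N_0=\gamma_1\supseteq\gamma_2\supseteq\cdots\supseteq\gamma_{c+1}=1$ be the lower central series of $N_0$. Each $\gamma_j$ is normal in $G_0$ and each $\gamma_j/\gamma_{j+1}$ is a finitely generated pro-$p$ $\mathbb{Z}_p[[Q_0]]$-module (finite generation because $G_0$, hence $G_0/\gamma_2$, is finitely generated). Since $[\gamma_j,\gamma_k]\subseteq\gamma_{j+k}\subseteq\gamma_{k+1}$, the factor $\gamma_j/\gamma_{j+1}$ acts trivially by conjugation on every $\gamma_k/\gamma_{k+1}$; iterating the Lyndon--Hochschild--Serre spectral sequence along the series $\gamma_1\supseteq\gamma_2\supseteq\cdots$ (whose successive quotients are abelian) and applying Lemma~\ref{abelian}(a) at each step shows that $H_q(N_0,\mathbb{Z}_p)\otimes_{\mathbb{Z}_p}\mathbb{Q}_p$ is, as a $\mathbb{Q}_p[[Q_0]]$-module, a subquotient of a direct sum of bounded length (depending only on $c$ and $q$) of the modules $\big(\widehat{\bigwedge}^{s_1}_{\mathbb{Z}_p}(\gamma_1/\gamma_2)\,\widehat{\otimes}_{\mathbb{Z}_p}\cdots\widehat{\otimes}_{\mathbb{Z}_p}\widehat{\bigwedge}^{s_c}_{\mathbb{Z}_p}(\gamma_c)\big)\otimes_{\mathbb{Z}_p}\mathbb{Q}_p$ with $s_1+\cdots+s_c=q$ and diagonal $Q_0$-action. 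The iterated-commutator maps give $Q_0$-equivariant epimorphisms $\widehat{\otimes}^{\,j}_{\mathbb{Z}_p}(\gamma_1/\gamma_2)\twoheadrightarrow\gamma_j/\gamma_{j+1}$, so each such module is a $\mathbb{Z}_p[[Q_0]]$-quotient of $\widehat{\otimes}^{\,k}_{\mathbb{Z}_p}(N_0/N_0')$ with $k=\sum_j js_j\leq cq\leq cm=d$.

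\textbf{The module estimate.} It remains to bound $\mathrm{rk}\,H_p(Q_0,V)$ uniformly over $G_0$, for $p\leq m$ and $V$ a $\mathbb{Z}_p[[Q_0]]$-subquotient of $\widehat{\otimes}^{\,k}_{\mathbb{Z}_p}(N_0/N_0')$ with $k\leq d$. A comparison argument of the type carried out in \cite{KP-centre-by-metabelian} identifies $\widehat{\otimes}^{\,k}_{\mathbb{Z}_p}(N_0/N_0')$, up to $\mathbb{Z}_p[[Q_0]]$-subquotients and finite direct sums of bounded length, with modules obtained by restriction to $Q_0$ from the $\mathbb{Z}_p[[Q]]$-modules $\widehat{\otimes}^{\,\ell}_{\mathbb{Z}_p}(N/N')$ with $\ell\leq d$; each of the latter has finitely generated diagonal square $\widehat{\otimes}^{\,2\ell}_{\mathbb{Z}_p}(N/N')$ because $G/N'$ is of type $FP_{2d}$, by the equivalence recalled in the introduction. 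For $p=0$ the required bound is then exactly Lemma~\ref{2m to m}. For $p\geq1$ one reduces to $p=0$: since $\mathbb{Q}_p[[Q_0]]$ is a regular local ring whose dimension equals the torsion-free rank of $Q$ (independent of $G_0$), writing $H_p(Q_0,V)\otimes_{\mathbb{Z}_p}\mathbb{Q}_p$ as Koszul homology with respect to a regular system of parameters, together with a finite filtration of $V$ by $\mathbb{Z}_p[[Q_0]]$-submodules with cyclic quotients, expresses $\mathrm{rk}\,H_p(Q_0,V)$ through boundedly many quantities $\mathrm{rk}\,H_0(Q_0,V')$ with $V'$ of the same controlled type, to which Lemma~\ref{2m to m} applies again. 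Summing over the boundedly many pairs $(p,q)$ with $p+q=i$ gives $\sup_{G_0}\mathrm{rk}\,H_i(G_0,\mathbb{Z}_p)<\infty$, and hence the theorem. I expect the two hardest points to be precisely these: making the comparison between the modules attached to the varying $G_0$ and the fixed module $N/N'$ accurate and uniform, and the passage from $H_0$ to higher $Q_0$-homology, since Lemma~\ref{2m to m} as stated only covers the case $p=0$.
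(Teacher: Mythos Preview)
Your proposal does not address the stated lemma at all. The statement you were asked to prove is Lemma~\ref{2m to m}: given a finitely generated abelian pro-$p$ group $Q$ and a finitely generated pro-$p$ $\mathbb{Z}_p[[Q]]$-module $B$ whose completed tensor square is finitely generated under the diagonal action, one must show that $\dim_{\mathbb{Q}_p} B\otimes_{\mathbb{Z}_p[[M]]}\mathbb{Q}_p$ is uniformly bounded as $M$ ranges over the open subgroups of $B\rtimes Q$. Your write-up, however, is a sketch of the proof of the main Theorem~\ref{principal}: you introduce the nilpotent kernel $N$, the lower central series, the Lyndon--Hochschild--Serre spectral sequence for $G_0$, and you repeatedly \emph{invoke} Lemma~\ref{2m to m} as a black box (``is then exactly Lemma~\ref{2m to m}'', ``to which Lemma~\ref{2m to m} applies again''). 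Using a lemma is not the same as proving it; nothing in your text establishes the conclusion of Lemma~\ref{2m to m} from its hypotheses.

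For context, the paper does not prove Lemma~\ref{2m to m} either: it is quoted verbatim from \cite[Lemma~5.1]{KP-centre-by-metabelian}, so there is no in-paper proof to compare against. A proof of the lemma itself lives in that reference and has nothing to do with nilpotent groups or spectral sequences over $G$; it is a statement purely about a $\mathbb{Z}_p[[Q]]$-module $B$ and the semidirect product $B\rtimes Q$. If your intention was actually to prove Theorem~\ref{principal}, then your outline is broadly in the right spirit but diverges from the paper's argument in one important respect: you carry the varying subgroup $N_0$ through the nilpotent analysis and defer to an unspecified ``comparison argument'' to pass to $N/N'$, whereas the paper eliminates this difficulty at the outset by proving (Proposition~\ref{prop-Tmap}) that $H_i(N_1,\mathbb{Z}_p)\to H_i(N,\mathbb{Z}_p)$ is a $T$-map, so that all subsequent module-theoretic work takes place over the fixed module $N/N'$ and the fixed tower $Q^{p^t}\leq Q$. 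Your two self-identified ``hardest points'' are exactly where your sketch is incomplete, and the paper handles both via Proposition~\ref{prop-Tmap} together with Theorem~\ref{shift dimension} rather than the Koszul/regular-local-ring route you suggest.
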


\begin{theorem}[{\cite[Theorem 5.5]{KP-centre-by-metabelian}}] \label{shift dimension}
Let $Q$ be a finitely generated abelian pro-$p$ group and $L$ a finitely generated pro-$p$ $\zpq$-module. If
$$
\sup_{t\geq 1} \dim_{\q_p} L\otimes_{\mathbb{Z}_p[[Q^{p^t}]]}\q_p < \infty,
$$
then
$$
\sup_{t\geq 1} \dim_{\q_p} H_i(Q^{p^t},L)\otimes_{\z_p}\q_p < \infty, ~~~~ \mbox{for all} ~ i.
$$ 
\end{theorem}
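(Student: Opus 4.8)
\emph{Sketch of a proof.} The plan is to trade the varying completed group algebras $\z_p[[Q^{p^t}]]$ for the single fixed ring $A:=\zpq\otimes_{\z_p}\q_p$ — a localisation of the Noetherian ring $\zpq$ (and in fact a regular ring of Krull dimension equal to the torsion-free rank of $Q$, hence of finite global dimension) — and then to show that, after inverting $p$, the whole homology of the tower $(Q^{p^t})_t$ is supported at only finitely many maximal ideals of $A$. First, since $\q_p$ is flat over $\z_p$ we have $H_i(Q^{p^t},L)\otimes_{\z_p}\q_p\cong\mathrm{Tor}_i^{\z_p[[Q^{p^t}]]}(\z_p,L)\otimes_{\z_p}\q_p$ for all $i,t$. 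As $Q^{p^t}$ is open of finite index in $Q$, the ring $\zpq$ is free of finite rank over $\z_p[[Q^{p^t}]]$ and $\z_p\otimes_{\z_p[[Q^{p^t}]]}\zpq=\z_p[Q/Q^{p^t}]$, so flat base change along $\z_p[[Q^{p^t}]]\hookrightarrow\zpq$ gives $\mathrm{Tor}_i^{\z_p[[Q^{p^t}]]}(\z_p,L)\cong\mathrm{Tor}_i^{\zpq}(\z_p[Q/Q^{p^t}],L)$. Tensoring with $\q_p$ and putting $V:=L\otimes_{\z_p}\q_p$, a finitely generated $A$-module, I obtain for all $i\ge0$
$$
H_i(Q^{p^t},L)\otimes_{\z_p}\q_p\ \cong\ \mathrm{Tor}_i^{A}\big(\q_p[Q/Q^{p^t}],\,V\big),
$$
so that the hypothesis reads $\sup_t\dim_{\q_p}\mathrm{Tor}_0^{A}(\q_p[Q/Q^{p^t}],V)<\infty$.

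Next I would decompose $\q_p[Q/Q^{p^t}]$. Since $Q/Q^{p^t}$ is a finite abelian $p$-group, this $\q_p$-algebra is semisimple, hence a finite product of finite field extensions of $\q_p$; each factor is a cyclic $A$-module $A/\mathfrak{p}$ for a maximal ideal $\mathfrak{p}$ at which the canonical map $Q\to(A/\mathfrak{p})^{\times}$ has finite (necessarily $p$-power) order — call these the cyclotomic points of $A$. Let $S_t$ be the finite set of maximal ideals occurring in this way; then $S_t\subseteq S_{t+1}$, the union $\bigcup_tS_t$ is the set of all cyclotomic points of $A$ (infinite when $Q$ is infinite), and
$$
\mathrm{Tor}_i^{A}\big(\q_p[Q/Q^{p^t}],V\big)\ \cong\ \bigoplus_{\mathfrak{p}\in S_t}\mathrm{Tor}_i^{A}(A/\mathfrak{p},\,V).
$$
For each maximal ideal $\mathfrak{p}$, the module $\mathrm{Tor}_i^{A}(A/\mathfrak{p},V)$ is finitely generated over $A$ and killed by $\mathfrak{p}$, hence finite-dimensional over the field $A/\mathfrak{p}$ and a fortiori over $\q_p$; moreover it is $0$ when $\mathfrak{p}\notin\mathrm{Supp}_A(V)$, and $0$ for $i>\dim A$.

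Finally I would invoke the hypothesis. Taking $i=0$ above, $\dim_{\q_p}L\otimes_{\z_p[[Q^{p^t}]]}\q_p=\sum_{\mathfrak{p}\in S_t}\dim_{\q_p}V/\mathfrak{p}V$, and by Nakayama every summand with $\mathfrak{p}\in\mathrm{Supp}_A(V)$ is nonzero; hence the left-hand side is at least $\#\big(S_t\cap\mathrm{Supp}_A(V)\big)$. Since the left-hand side is bounded in $t$ while the $S_t$ exhaust all cyclotomic points, $\mathrm{Supp}_A(V)$ contains only finitely many cyclotomic points, say $\mathfrak{p}_1,\dots,\mathfrak{p}_N$. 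Choosing $t_0$ with $\{\mathfrak{p}_1,\dots,\mathfrak{p}_N\}\subseteq S_{t_0}$, for every $t\ge t_0$ the only nonzero summands in the displayed sum are those indexed by the $\mathfrak{p}_n$, so $H_i(Q^{p^t},L)\otimes_{\z_p}\q_p\cong\bigoplus_{n=1}^N\mathrm{Tor}_i^{A}(A/\mathfrak{p}_n,V)$, independently of $t\ge t_0$. Together with the finitely many values at $t<t_0$, this yields $\sup_{t\ge1}\dim_{\q_p}H_i(Q^{p^t},L)\otimes_{\z_p}\q_p<\infty$ for every $i$.

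The step I expect to be the heart of the argument is the observation in the second paragraph that boundedness of $\dim_{\q_p}H_0$ along the tower is exactly the statement that $\mathrm{Supp}_A(V)$ meets the set of cyclotomic points in a finite set; once the homology has been rewritten as $\bigoplus_{\mathfrak{p}}\mathrm{Tor}_i^{A}(A/\mathfrak{p},V)$ over cyclotomic points, the rest is bookkeeping. The supporting facts — freeness of $\zpq$ over $\z_p[[Q^{p^t}]]$, the identity $\z_p\otimes_{\z_p[[Q^{p^t}]]}\zpq=\z_p[Q/Q^{p^t}]$, and the standard ring-theoretic properties of $A=\zpq[1/p]$ (Noetherian, finite global dimension) — are routine.
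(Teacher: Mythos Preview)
This theorem is not proved in the present paper; it is quoted from \cite[Theorem~5.5]{KP-centre-by-metabelian}, so there is no in-paper argument to compare your sketch against. On its own merits your outline is correct: the flat base change identifying $H_i(Q^{p^t},L)\otimes_{\z_p}\q_p$ with $\mathrm{Tor}_i^{A}(\q_p[Q/Q^{p^t}],V)$ for $A=\zpq[1/p]$ and $V=L\otimes_{\z_p}\q_p$, the semisimple decomposition of $\q_p[Q/Q^{p^t}]$ into residue fields $A/\mathfrak{p}$ at the ``cyclotomic'' maximal ideals, and the support-counting step (boundedness of $\sum_{\mathfrak{p}\in S_t}\dim_{\q_p}V/\mathfrak{p}V$ forces $\mathrm{Supp}_A V$ to meet $\bigcup_t S_t$ in a finite set, after which each $\mathrm{Tor}_i^{A}(A/\mathfrak{p},V)$ with $\mathfrak{p}\notin\mathrm{Supp}_A V$ vanishes because it is $\mathfrak{p}$-torsion with zero localisation at $\mathfrak{p}$) all go through.

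Two minor remarks. First, you do not actually need regularity or finite global dimension of $A$: Noetherianity together with $[A/\mathfrak{p}:\q_p]<\infty$ already makes every $\mathrm{Tor}_i^{A}(A/\mathfrak{p},V)$ finite-dimensional over $\q_p$. Second, when $Q$ has nontrivial torsion, $A$ is not a domain but a finite product of regular domains; all of your assertions hold component-wise, so this causes no difficulty.
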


\noindent \textbf{Remark.} We will need later to apply Theorem \ref{shift dimension} in a more general situation when we do not know whether $L$ is a finitely generated  pro-$p$ $\zpq$-module but there is a 
finitely generated  pro-$p$ $\zpq$-submodule $L_0$ of $L$ such that the inclusion $L_0 \to L$ induces an isomorphism $L_0 \otimes_{\mathbb{Z}p} \mathbb{Q}_p \to L \otimes_{\mathbb{Z}_p} \mathbb{Q}_p$.
Indeed, in this case Theorem \ref{shift dimension} holds for $L$ substituted with $L_0$ and since we have the isomorphisms
$L_0\otimes_{\mathbb{Z}_p[[Q^{p^t}]]}\q_p \simeq L\otimes_{\mathbb{Z}_p[[Q^{p^t}]]}\q_p$ and  $H_i(Q^{p^t},L_0)\otimes_{\z_p}\q_p \simeq  H_i(Q^{p^t},L)\otimes_{\z_p}\q_p$, we conclude  the result holds for $L$.

\medskip \begin{lemma} \label{subquotients}
Let $Q$ be a finitely generated abelian pro-$p$ group and $V$ be a pro-$p$ $\mathbb{Z}_p[[Q]]$-module such that $\widehat{\bigotimes}^{n}_{\mathbb{Z}_p}V$ is a finitely generated pro-$p$ $\mathbb{Z}_p[[Q]]$-module  via the diagonal $Q$-action. If 
$$
\sup_{t\geq 1} \dim_{\mathbb{Q}_p} (\widehat{\bigotimes}^n_{\mathbb{Z}_p}V) \otimes_{\mathbb{Z}_p[[Q^{p^t}]]}\mathbb{Q}_p < \infty,
$$
then for any $\mathbb{Z}_p[[Q]]$-subquotient $U$ of $\widehat{\bigotimes}^n_{\mathbb{Z}_p}V$ we have
$$
\sup_{t\geq 1} \dim_{\mathbb{Q}_p} U \otimes_{\mathbb{Z}_p[[Q^{p^t}]]}\mathbb{Q}_p < \infty.
$$
    
\end{lemma}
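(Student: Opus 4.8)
The plan is to first reduce to the case of a submodule. Write $W := \widehat{\bigotimes}^{n}_{\mathbb{Z}_p} V$ and let $U$ be a $\mathbb{Z}_p[[Q]]$-subquotient of $W$, say $U \cong W_1/W_2$ for closed $\mathbb{Z}_p[[Q]]$-submodules $W_2 \subseteq W_1 \subseteq W$. Since $Q$ is a finitely generated abelian pro-$p$ group, the Iwasawa algebra $\mathbb{Z}_p[[Q]]$ is Noetherian; hence, $W$ being finitely generated, each of $W_1$, $W_2$ and $W/W_1$ is again a finitely generated pro-$p$ $\mathbb{Z}_p[[Q]]$-module, and $W_1$ is closed in $W$. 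As $U$ is a quotient of $W_1$ and $-\otimes_{\mathbb{Z}_p[[Q^{p^t}]]}\mathbb{Q}_p$ is right exact, we have $\dim_{\mathbb{Q}_p} U\otimes_{\mathbb{Z}_p[[Q^{p^t}]]}\mathbb{Q}_p \leq \dim_{\mathbb{Q}_p} W_1\otimes_{\mathbb{Z}_p[[Q^{p^t}]]}\mathbb{Q}_p$ for every $t \geq 1$, so it suffices to bound the right-hand side uniformly in $t$.

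For this I would exploit the long exact sequence in the homology of $Q^{p^t}$ attached to the short exact sequence $0 \to W_1 \to W \to W/W_1 \to 0$ of pro-$p$ $\mathbb{Z}_p[[Q^{p^t}]]$-modules, whose relevant portion is
$$
H_1(Q^{p^t}, W/W_1) \longrightarrow H_0(Q^{p^t}, W_1) \longrightarrow H_0(Q^{p^t}, W).
$$
Tensoring with the flat $\mathbb{Z}_p$-module $\mathbb{Q}_p$ preserves exactness, and using the natural identification $H_0(Q^{p^t}, X)\otimes_{\mathbb{Z}_p}\mathbb{Q}_p \simeq X\otimes_{\mathbb{Z}_p[[Q^{p^t}]]}\mathbb{Q}_p$ one gets
$$
\dim_{\mathbb{Q}_p} W_1\otimes_{\mathbb{Z}_p[[Q^{p^t}]]}\mathbb{Q}_p \leq \dim_{\mathbb{Q}_p} H_1(Q^{p^t}, W/W_1)\otimes_{\mathbb{Z}_p}\mathbb{Q}_p + \dim_{\mathbb{Q}_p} W\otimes_{\mathbb{Z}_p[[Q^{p^t}]]}\mathbb{Q}_p .
$$
The last term is bounded in $t$ by hypothesis. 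For the first term, note that $W/W_1$ is a finitely generated pro-$p$ $\mathbb{Z}_p[[Q]]$-module which, being a quotient of $W$, satisfies $\dim_{\mathbb{Q}_p}(W/W_1)\otimes_{\mathbb{Z}_p[[Q^{p^t}]]}\mathbb{Q}_p \leq \dim_{\mathbb{Q}_p} W\otimes_{\mathbb{Z}_p[[Q^{p^t}]]}\mathbb{Q}_p$; thus $L := W/W_1$ meets the hypothesis of Theorem \ref{shift dimension}, and applying that theorem with $i = 1$ yields $\sup_{t\geq 1}\dim_{\mathbb{Q}_p} H_1(Q^{p^t}, W/W_1)\otimes_{\mathbb{Z}_p}\mathbb{Q}_p < \infty$. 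Combining the two bounds gives $\sup_{t\geq 1}\dim_{\mathbb{Q}_p} W_1\otimes_{\mathbb{Z}_p[[Q^{p^t}]]}\mathbb{Q}_p < \infty$, and therefore $\sup_{t\geq 1}\dim_{\mathbb{Q}_p} U\otimes_{\mathbb{Z}_p[[Q^{p^t}]]}\mathbb{Q}_p < \infty$, as required.

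The point that makes this go through — and the reason one cannot simply treat $U$ as a subobject — is that $-\otimes_{\mathbb{Z}_p[[Q^{p^t}]]}\mathbb{Q}_p$ is not left exact, so the inclusion $W_1 \subseteq W$ does not by itself control the growth of $W_1\otimes_{\mathbb{Z}_p[[Q^{p^t}]]}\mathbb{Q}_p$; the discrepancy is a quotient of $H_1(Q^{p^t}, W/W_1)$, and it is precisely Theorem \ref{shift dimension}, applied to the \emph{quotient} $W/W_1$ rather than to $W_1$ itself, that keeps this error term uniformly bounded. The only other ingredients I would use without further comment are the standard facts that $\mathbb{Z}_p[[Q]]$ is Noetherian and that finitely generated submodules of finitely generated pro-$p$ $\mathbb{Z}_p[[Q]]$-modules are closed, so that every module appearing is again a finitely generated pro-$p$ module and the homology long exact sequence is available.
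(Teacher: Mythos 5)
Your proof is correct and uses essentially the same strategy as the paper's: handle quotients trivially by right-exactness of $-\otimes_{\mathbb{Z}_p[[Q^{p^t}]]}\mathbb{Q}_p$, then control a submodule via the long exact sequence in $Q^{p^t}$-homology together with Theorem~\ref{shift dimension} applied to the \emph{quotient} side of the relevant short exact sequence. The only cosmetic difference is that you realise the subquotient as a quotient of a submodule of $\widehat{\bigotimes}^n_{\mathbb{Z}_p}V$ and bound that submodule, whereas the paper realises it directly as a submodule of a quotient $(\widehat{\bigotimes}^n_{\mathbb{Z}_p}V)/T$; these decompositions are interchangeable and lead to the same long-exact-sequence argument.
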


\begin{proof}
Firstly, if $U = (\widehat{\bigotimes}_{\mathbb{Z}_p}^n V)/T$ is a $\mathbb{Z}_p[[Q]]$-quotient of $\widehat{\bigotimes}_{\mathbb{Z}_p}^nV$,  for some pro-$p$ $\mathbb{Z}_p[[Q]]$-submodule $T$ of $\widehat{\bigotimes}_{\mathbb{Z}_p}^n V$, then clearly
$$
\dim_{\mathbb{Q}_p} U\otimes_{\mathbb{Z}_p[[Q^{p^t}]]} \mathbb{Q}_p \leq \dim_{\mathbb{Q}_p} (\widehat{\bigotimes}^n_{\mathbb{Z}_p}V) \otimes_{\mathbb{Z}_p[[Q^{p^t}]]}\mathbb{Q}_p.
$$
From this, it follows immediately that 
$$
\sup_{t\geq 1}\dim_{\mathbb{Q}_p} U\otimes_{\mathbb{Z}_p[[Q^{p^t}]]} \mathbb{Q}_p \leq \sup_{t\geq 1}\dim_{\mathbb{Q}_p} (\widehat{\bigotimes}^n_{\mathbb{Z}_p}V) \otimes_{\mathbb{Z}_p[[Q^{p^t}]]}\mathbb{Q}_p < \infty.
$$

For the general case, let $U$ be a pro-$p$ $\mathbb{Z}_p[[Q]]$-submodule of some $W := (\widehat{\bigotimes}_{\mathbb{Z}_p}^n V)/T$. Then $W/U$ is of the form $(\widehat{\bigotimes}_{\mathbb{Z}_p}^n V)/T'$, for some pro-$p$ $\mathbb{Z}_p[[Q]]$-submodule $T'$ of $\widehat{\bigotimes}_{\mathbb{Z}_p}^n V$. So, as above,
$$
\sup_{t\geq 1}\dim_{\mathbb{Q}_p} W \otimes_{\mathbb{Z}_p[[Q^{p^t}]]} \mathbb{Q}_p < \infty ~~~ \mbox{and} ~~~ \sup_{t\geq 1}\dim_{\mathbb{Q}_p} (W/U) \otimes_{\mathbb{Z}_p[[Q^{p^t}]]}\mathbb{Q}_p < \infty.
$$
Since $\sup_{t\geq 1}\dim_{\mathbb{Q}_p} (W/U) \otimes_{\mathbb{Z}_p[[Q^{p^t}]]}\mathbb{Q}_p < \infty$, by Theorem \ref{shift dimension}, we have
$$
\sup_{t\geq 1}\dim_{\mathbb{Q}_p} H_i(Q^{p^t}, W/U) \otimes_{\mathbb{Z}_p} \mathbb{Q}_p < \infty,~~\mbox{for all}~ i. 
$$
Therefore, considering the long exact sequence for pro-$p$ homology associated to the short exact sequence of $\mathbb{Z}_p[[Q]]$-modules $0\to U \to W \to W/U \to 0$, 
$$
\cdots \to H_1(Q^{p^t}, W/U) \to U \widehat{\otimes}_{\mathbb{Z}_p[[Q^{p^t}]]}\mathbb{Z}_p \to  W \widehat{\otimes}_{\mathbb{Z}_p[[Q^{p^t}]]}\mathbb{Z}_p \to 
$$
$$
\hspace{8cm} \to W/U \widehat{\otimes}_{\mathbb{Z}_p[[Q^{p^t}]]}\mathbb{Z}_p \to 0,
$$
we conclude that  
$$
\sup_{t\geq 1}\dim_{\mathbb{Q}_p} U \otimes_{\mathbb{Z}_p[[Q^{p^t}]]}\mathbb{Q}_p ~ \leq ~
\sup_{t\geq 1}\dim_{\mathbb{Q}_p} W \otimes_{\mathbb{Z}_p[[Q^{p^t}]]} \mathbb{Q}_p 
~~ $$ $$
+ ~~ \sup_{t\geq 1}\dim_{\mathbb{Q}_p} H_1(Q^{p^t}, W/U) \otimes_{\mathbb{Z}_p} \mathbb{Q}_p 
< \infty.   \hspace{2.9cm}
$$
\end{proof}

\section{T-maps}

 This section aims to establish a relationship between the homologies of a nilpotent pro-p group and those of its finite index pro-p subgroups, even in the case where the nilpotent pro-p group is not finitely generated.

Let $$\mu : V \to W$$ be a homomorphism of pro-$p$ $\mathbb{Z}_p$-modules. We say that $\mu$ is a $T$-map if every element $v \in Ker(\mu)$ and $w \in Coker(\mu)$ is $\mathbb{Z}_p$-torsion i.e. there are $\lambda_1, \lambda_2 \in \mathbb{Z}_p \setminus \{ 0 \}$ such that $ \lambda_1 v = 0, \lambda_2 w = 0$. We also say $Ker(\mu)$ and $Coker(\mu)$ are $\mathbb{Z}_p$-torsion. 

\begin{lemma} \label{criterion}  Let $\mu : V \to W$ be a homomorphism of pro-$p$ $\mathbb{Z}_p$-modules. Then $\mu$ is a $T$-map if and only if $\mu$ induces an isomorphism $\mu_0 : V \otimes_{\mathbb{Z}_p} \mathbb{Q}_p \to W \otimes_{\mathbb{Z}_p} \mathbb{Q}_p$.
\end{lemma}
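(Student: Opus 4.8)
The plan is to reduce everything to the elementary observation that, for a $\mathbb{Z}_p$-module $A$, one has $A\otimes_{\mathbb{Z}_p}\mathbb{Q}_p=0$ if and only if $A$ is $\mathbb{Z}_p$-torsion, and then to apply this to $Ker(\mu)$ and $Coker(\mu)$.

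First I would record that $\mathbb{Q}_p$ is the localization $S^{-1}\mathbb{Z}_p$ at the multiplicative set $S=\mathbb{Z}_p\setminus\{0\}$, so that $-\otimes_{\mathbb{Z}_p}\mathbb{Q}_p$ is canonically the localization functor $S^{-1}(-)$ and is in particular exact (flatness of $\mathbb{Q}_p$ over $\mathbb{Z}_p$). From the description of $S^{-1}A$ by fractions $a/\lambda$ and the vanishing criterion for such a fraction, one gets directly that $S^{-1}A=0$ iff every $a\in A$ is killed by some $\lambda\in S$; this is exactly the equivalence ``$A\otimes_{\mathbb{Z}_p}\mathbb{Q}_p=0\iff A$ is $\mathbb{Z}_p$-torsion''.

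Next I would factor $\mu$ through its image. Since $\mu$ is a continuous homomorphism of profinite (hence compact Hausdorff) modules, $I:=\mu(V)$ is closed, and we obtain short exact sequences $0\to Ker(\mu)\to V\to I\to 0$ and $0\to I\to W\to Coker(\mu)\to 0$ of pro-$p$ $\mathbb{Z}_p$-modules. Applying the exact functor $-\otimes_{\mathbb{Z}_p}\mathbb{Q}_p$ and splicing the two resulting sequences, $\mu_0$ factors as $V\otimes_{\mathbb{Z}_p}\mathbb{Q}_p\twoheadrightarrow I\otimes_{\mathbb{Z}_p}\mathbb{Q}_p\hookrightarrow W\otimes_{\mathbb{Z}_p}\mathbb{Q}_p$, whence $Ker(\mu_0)\cong Ker(\mu)\otimes_{\mathbb{Z}_p}\mathbb{Q}_p$ and $Coker(\mu_0)\cong Coker(\mu)\otimes_{\mathbb{Z}_p}\mathbb{Q}_p$. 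Thus $\mu_0$ is an isomorphism iff both of these vanish, which by the previous paragraph holds iff $Ker(\mu)$ and $Coker(\mu)$ are $\mathbb{Z}_p$-torsion, i.e. iff $\mu$ is a $T$-map.

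There is no genuinely hard step here; the only points meriting care are the exactness of $-\otimes_{\mathbb{Z}_p}\mathbb{Q}_p$ and the remark that the image of $\mu$ is closed, so that $Coker(\mu)$ is again a pro-$p$ module to which the torsion criterion applies. If one prefers to avoid localization language, the forward direction can be checked by hand: a torsion element $a$ with $\lambda a=0$ satisfies $a\otimes q=(\lambda a)\otimes(q/\lambda)=0$, which kills $Ker(\mu)$ and $Coker(\mu)$ after tensoring and yields that $\mu_0$ is an isomorphism; while for the converse one notes that if $a\in Ker(\mu)$ is non-torsion then the inclusion $\mathbb{Z}_p a\cong\mathbb{Z}_p\hookrightarrow V$ stays injective after tensoring with the flat module $\mathbb{Q}_p$, so $a\otimes 1$ would be a nonzero element of $Ker(\mu_0)$, and symmetrically for $Coker$.
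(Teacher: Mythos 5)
Your proof is correct and follows essentially the same route as the paper: factor $\mu$ through its image, apply the exact functor $-\otimes_{\mathbb{Z}_p}\mathbb{Q}_p$ to the two resulting short exact sequences to identify $Ker(\mu_0)$ with $Ker(\mu)\otimes_{\mathbb{Z}_p}\mathbb{Q}_p$ and $Coker(\mu_0)$ with $Coker(\mu)\otimes_{\mathbb{Z}_p}\mathbb{Q}_p$, and finish with the criterion that a $\mathbb{Z}_p$-module is torsion iff it vanishes after tensoring with $\mathbb{Q}_p$. Your added remarks (the localization interpretation of $\mathbb{Q}_p$, and the closedness of the image so $Coker(\mu)$ is again pro-$p$) are sensible elaborations of details the paper leaves implicit.
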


\begin{proof}
Consider the short exact sequences $$0 \to Ker(\mu) \to V \to Im (\mu) \to 0$$ and $$0 \to Im (\mu) \to W \to Coker(\mu) \to 0$$ Since $\otimes_{\mathbb{Z}_p} \mathbb{Q}_p$ is an exact functor we get exact sequences $$0 \to Ker(\mu) \otimes_{\mathbb{Z}_p} \mathbb{Q}_p \to V \otimes_{\mathbb{Z}_p} \mathbb{Q}_p \to Im (\mu)\otimes_{\mathbb{Z}_p} \mathbb{Q}_p \to 0$$
and $$0 \to Im (\mu) \otimes_{\mathbb{Z}_p} \mathbb{Q}_p \to W \otimes_{\mathbb{Z}_p} \mathbb{Q}_p \to Coker(\mu) \otimes_{\mathbb{Z}_p} \mathbb{Q}_p \to 0$$
Thus $$Ker(\mu_0) \simeq Ker(\mu) \otimes_{\mathbb{Z}_p} \mathbb{Q}_p \hbox{ and }  Coker(\mu_0) \simeq Coker(\mu) \otimes_{\mathbb{Z}_p} \mathbb{Q}_p$$
Finally for a $\mathbb{Z}_p$-module $U$ we have that $U$ is $\mathbb{Z}_p$-torsion if and only if $U \otimes_{\mathbb{Z}_p} \mathbb{Q}_p = 0$.
\end{proof}

\begin{lemma} \label{prod} If  $\mu_1 : V_1 \to W_1$ and $\mu_2 : V_2 \to W_2$ are $T$-maps then
$\mu_1 \widehat{\otimes}_{\mathbb{Z}_p} \mu_2 : V_1  \widehat{\otimes}_{\mathbb{Z}_p} V_2 \to W_1  \widehat{\otimes}_{\mathbb{Z}_p} W_2$ is a $T$-map.
\end{lemma}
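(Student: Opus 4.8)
The plan is to reduce to the one-variable situation and then exploit the fact that the kernel and cokernel of a $T$-map, being torsion profinite $\mathbb{Z}_p$-modules, automatically have bounded exponent.

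First I would observe, via Lemma~\ref{criterion}, that a homomorphism of pro-$p$ $\mathbb{Z}_p$-modules is a $T$-map precisely when $-\otimes_{\mathbb{Z}_p}\mathbb{Q}_p$ turns it into an isomorphism; since that functor is exact and a composite of isomorphisms is an isomorphism, a composite of $T$-maps is again a $T$-map. Using the natural symmetry of $\widehat{\otimes}_{\mathbb{Z}_p}$ and the factorisation
\[
\mu_1\widehat{\otimes}_{\mathbb{Z}_p}\mu_2=(\mu_1\widehat{\otimes}_{\mathbb{Z}_p}\id_{W_2})\circ(\id_{V_1}\widehat{\otimes}_{\mathbb{Z}_p}\mu_2),
\]
it then suffices to prove the following special case: if $\mu\colon V\to W$ is a $T$-map and $X$ is any pro-$p$ $\mathbb{Z}_p$-module, then $\mu\widehat{\otimes}_{\mathbb{Z}_p}\id_X$ is a $T$-map.

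For this special case I would first record that a torsion profinite $\mathbb{Z}_p$-module $T$ has bounded exponent: writing $T=\bigcup_{n\ge 1}T[p^n]$ as a countable union of closed subgroups and using that a profinite group is a Baire space, some $T[p^n]$ is open, so $T/T[p^n]$ is a finite $\mathbb{Z}_p$-module and hence $p^N T=0$ for suitable $N$. Applying this to $K:=\ker\mu$ and $C:=\operatorname{coker}\mu$ (both profinite, since a continuous homomorphism of profinite modules has closed image), fix $N$ with $p^N K=p^N C=0$ and factor $\mu$ as $V\xrightarrow{\pi}I\xrightarrow{j}W$ with $I=\operatorname{im}\mu$. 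As $-\widehat{\otimes}_{\mathbb{Z}_p}X$ is right exact, $\pi\widehat{\otimes}\id_X$ is onto and its kernel is a quotient of $K\widehat{\otimes}_{\mathbb{Z}_p}X$, which is killed by $p^N$ because $p^N\id_K=0$; hence $\pi\widehat{\otimes}\id_X$ is a $T$-map. For $j$, right exactness gives $\operatorname{coker}(j\widehat{\otimes}\id_X)=C\widehat{\otimes}_{\mathbb{Z}_p}X$, again killed by $p^N$; and since $p^N C=0$ forces $p^N W\subseteq I$, multiplication by $p^N$ on $W$ factors as $W\xrightarrow{s}I\xrightarrow{j}W$ with $s(w)=p^N w$, so that $s\circ j$ is multiplication by $p^N$ on $I$ and applying $-\widehat{\otimes}_{\mathbb{Z}_p}X$ shows every element of $\ker(j\widehat{\otimes}\id_X)$ is killed by $p^N$. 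Thus $j\widehat{\otimes}\id_X$ is a $T$-map, and therefore so is the composite $\mu\widehat{\otimes}\id_X=(j\widehat{\otimes}\id_X)\circ(\pi\widehat{\otimes}\id_X)$, which finishes the special case and hence the lemma.

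The step I expect to be the main obstacle is controlling $\ker(\mu\widehat{\otimes}\id_X)$: because $\widehat{\otimes}_{\mathbb{Z}_p}$ is only right exact and not exact, a naive diagram chase through $0\to K\to V\to W\to C\to 0$ does not work, and it is precisely the bounded-exponent observation together with the factorisation $j\circ s=p^N\id_W$ that substitutes for it. I would also want to double-check the background facts used: that images of continuous homomorphisms of profinite modules are closed, and that $-\widehat{\otimes}_{\mathbb{Z}_p}X$ is right exact and additive, so that scalar multiplication on $M\widehat{\otimes}_{\mathbb{Z}_p}X$ is induced by scalar multiplication on $M$.
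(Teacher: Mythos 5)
Your proof is correct, and at the one place that actually needs care it is more careful than the paper's own argument. Both you and the paper factor $\mu_1\widehat{\otimes}_{\mathbb{Z}_p}\mu_2$ as a composite of maps in which one tensor factor is the identity, reduce to the one-variable case, and then handle the cokernel by right exactness in exactly the same way: $\operatorname{coker}(\mu\widehat{\otimes}\mathrm{id}_X)\cong(\operatorname{coker}\mu)\widehat{\otimes}_{\mathbb{Z}_p}X$. For the kernel, the paper simply asserts that $\ker(\mu_1\widehat{\otimes}\mathrm{id}_{V_2})$ is the image of $\ker(\mu_1)\widehat{\otimes}_{\mathbb{Z}_p}V_2$ in $V_1\widehat{\otimes}_{\mathbb{Z}_p}V_2$; but right exactness only identifies that image with the kernel of the surjection onto $\operatorname{Im}(\mu_1)\widehat{\otimes}_{\mathbb{Z}_p}V_2$, and there is an additional contribution to $\ker(\mu_1\widehat{\otimes}\mathrm{id}_{V_2})$ coming from $\ker\bigl(\operatorname{Im}(\mu_1)\widehat{\otimes}_{\mathbb{Z}_p}V_2\to W_1\widehat{\otimes}_{\mathbb{Z}_p}V_2\bigr)$ --- which, since $\widehat{\otimes}_{\mathbb{Z}_p}$ is not left exact, need not vanish. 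You correctly flag precisely this as the main obstacle, and your bounded-exponent argument (torsion profinite $\mathbb{Z}_p$-modules have bounded exponent by Baire category, and then $j\circ s=p^N\cdot\mathrm{id}_W$, $s\circ j=p^N\cdot\mathrm{id}_I$ imply $p^N\cdot\ker(j\widehat{\otimes}\mathrm{id}_X)=0$) is a clean, self-contained way to close it; an alternative would be to invoke the left-derived functor and note that $\widehat{\mathrm{Tor}}_1(\operatorname{coker}\mu_1,V_2)$ is torsion because $\operatorname{coker}\mu_1$ is. What your route buys is a fully justified kernel estimate without appealing to derived functors; the paper's shorter version reads as though the completed tensor product were left exact and, as written, skips a step.
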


\begin{proof}
Note that $$\mu_1 \widehat{\otimes}_{\mathbb{Z}_p} \mu_2  = (id_{W_1}  \widehat{\otimes}_{\mathbb{Z}_p} \mu_2 ) (\mu_1 \widehat{\otimes}_{\mathbb{Z}_p} id_{V_2})$$ and composition of $T$-maps is a $T$-map, hence it suffices to consider two special cases: $\mu_1$ or $\mu_2$ are the identity maps. Both cases are similar, hence without loss of generality we can assume that $V_2 = W_2$ and $\mu_2$ is the identity map. Then using that $ \widehat{\otimes}_{\mathbb{Z}_p}$ is right exact functor we have for 
$$\mu_1 \widehat{\otimes}_{\mathbb{Z}_p} id_{V_2} :  V_1  \widehat{\otimes}_{\mathbb{Z}_p} V_2 \to W_1  \widehat{\otimes}_{\mathbb{Z}_p} V_2$$
that $Coker(\mu_1 \widehat{\otimes}_{\mathbb{Z}_p} id_{V_2}) \simeq (Coker \mu_1) \widehat{\otimes}_{\mathbb{Z}_p}V_2$ is $\mathbb{Z}_p$-torsion, since  $Coker (\mu_1)$ is 
$\mathbb{Z}_p$-torsion. Furthermore, $Ker(\mu_1 \widehat{\otimes}_{\mathbb{Z}_p} id_{V_2} )$ is the image of $Ker(\mu_1) \widehat{\otimes}_{\mathbb{Z}_p} V_2$ in $ V_1  \widehat{\otimes}_{\mathbb{Z}_p} V_2$ and $Ker(\mu_1) \widehat{\otimes}_{\mathbb{Z}_p} V_2$ is $\mathbb{Z}_p$-torsion, since $Ker(\mu_1)$ is $\mathbb{Z}_p$-torsion. Thus, we conclude that $Ker(\mu_1 \widehat{\otimes}_{\mathbb{Z}_p} id_{V_2} )$ is $\mathbb{Z}_p$-torsion.

\end{proof}

\begin{lemma} \label{hom-chain}
Let $\mathcal{A}_1, \mathcal{A}_2$ be complexes of  pro-$p$ $\mathbb{Z}_p$-modules and $\theta: \mathcal{A}_1 \to \mathcal{A}_2$ be a chain map that at each dimension is a $T$-map, i.e. we say it is a chain $T$-map. Then, for every $i$, we have that the induced map
$$\theta_i : H_i ( \mathcal{A}_1) \to H_i(\mathcal{A}_2)$$
is a $T$-map.
\end{lemma}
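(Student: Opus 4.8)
The plan is to reduce the whole statement to Lemma~\ref{criterion}, which reformulates the $T$-map condition as the assertion that the map becomes an isomorphism after applying the exact functor $-\otimes_{\mathbb{Z}_p}\mathbb{Q}_p$. First I would observe that, by Lemma~\ref{criterion} applied in each degree, the hypothesis that $\theta$ is a chain $T$-map is equivalent to saying that $\theta\otimes_{\mathbb{Z}_p}\mathbb{Q}_p\colon \mathcal{A}_1\otimes_{\mathbb{Z}_p}\mathbb{Q}_p \to \mathcal{A}_2\otimes_{\mathbb{Z}_p}\mathbb{Q}_p$ is an isomorphism of complexes of $\mathbb{Q}_p$-vector spaces, i.e.\ an isomorphism in every degree.

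Next I would use that $-\otimes_{\mathbb{Z}_p}\mathbb{Q}_p$ is exact (indeed $\mathbb{Q}_p$ is flat over $\mathbb{Z}_p$), and that an exact functor commutes with the formation of homology of a chain complex, since it preserves kernels, images and cokernels; thus for each $i$ and each $j\in\{1,2\}$ there is a natural isomorphism $H_i(\mathcal{A}_j)\otimes_{\mathbb{Z}_p}\mathbb{Q}_p \simeq H_i(\mathcal{A}_j\otimes_{\mathbb{Z}_p}\mathbb{Q}_p)$. Naturality in the complex ensures that, under these identifications, the map $\theta_i\otimes_{\mathbb{Z}_p}\mathbb{Q}_p$ corresponds to $H_i(\theta\otimes_{\mathbb{Z}_p}\mathbb{Q}_p)$. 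An isomorphism of complexes induces isomorphisms on all homology groups, so $H_i(\theta\otimes_{\mathbb{Z}_p}\mathbb{Q}_p)$ is an isomorphism, and hence so is $\theta_i\otimes_{\mathbb{Z}_p}\mathbb{Q}_p$.

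Finally, applying Lemma~\ref{criterion} once more, this time to the homomorphism $\theta_i\colon H_i(\mathcal{A}_1)\to H_i(\mathcal{A}_2)$, the fact that $\theta_i\otimes_{\mathbb{Z}_p}\mathbb{Q}_p$ is an isomorphism yields that $\theta_i$ is a $T$-map. I do not expect a genuine obstacle: the only point needing care is the interchange of $-\otimes_{\mathbb{Z}_p}\mathbb{Q}_p$ with homology, which is immediate from exactness, and the rest of the content is entirely carried by Lemma~\ref{criterion}. (One could instead argue by a direct diagram chase showing $\ker\theta_i$ and $\operatorname{coker}\theta_i$ are $\mathbb{Z}_p$-torsion, but routing through Lemma~\ref{criterion} is cleaner and avoids bookkeeping with cycles and boundaries.)
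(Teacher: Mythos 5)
Your proof is correct, but it follows a genuinely different route from the paper's. The paper stays inside the $T$-map framework: it factors $\theta$ through its image, sets $\mathcal{A}_0 = \operatorname{Ker}(\theta)$ and $\mathcal{A}_3 = \operatorname{Coker}(\theta)$, notes that these are complexes with $\mathbb{Z}_p$-torsion entries (hence torsion homology), and then reads off from the two long exact sequences in homology that $H_i(\mathcal{A}_1)\to H_i(\operatorname{Im}\theta)$ and $H_i(\operatorname{Im}\theta)\to H_i(\mathcal{A}_2)$ are each $T$-maps, finishing by composing. You instead push the whole problem into the category of $\mathbb{Q}_p$-vector spaces: Lemma~\ref{criterion} turns the hypothesis into $\theta\otimes_{\mathbb{Z}_p}\mathbb{Q}_p$ being an isomorphism of complexes, exactness of $-\otimes_{\mathbb{Z}_p}\mathbb{Q}_p$ gives the standard commutation with homology, and Lemma~\ref{criterion} converts back. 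Your version is shorter and makes the mechanism transparent (the $T$-map condition is precisely invertibility after rationalization, and rationalization commutes with homology); the paper's version avoids having to invoke the commutation of an exact functor with homology and keeps the bookkeeping entirely in terms of torsion kernels and cokernels, which fits the style of the surrounding lemmas. Both arguments are valid in the pro-$p$ setting since continuous maps of profinite modules have closed images, so the (topological) homology of a complex of pro-$p$ $\mathbb{Z}_p$-modules agrees with the underlying algebraic homology, and the long exact sequence / exact-functor manipulations go through unchanged.
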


\begin{proof}
Consider the complexes $\mathcal{A}_0 = Ker (\theta)$ and $\mathcal{A}_3 = Coker(\theta)$.

Consider the short exact sequence of complexes $0 \to \mathcal{A}_0 \to \mathcal{A}_1 \to Im (\theta) \to 0$ induced by $\theta$. Note that each module of the complex $\mathcal{A}_0$ is $\mathbb{Z}_p$-torsion, then the long exact sequence in homology implies that $$H_i(\mathcal{A}_1) \to H_i(Im (\theta))$$ is a  $T$-map.

Similarly consider the short exact sequence of complexes $0 \to Im (\theta) \to \mathcal{A}_2 \to  \mathcal{A}_3 \to 0$ induced by the embedding of $Im(\theta)$ in $\mathcal{A}_2 $. Using that each module of the complex $\mathcal{A}_3$ is $\mathbb{Z}_p$-torsion and the long exact sequence in homology implies that 
 $$H_i( Im (\theta)) \to H_i(\mathcal{A}_2)$$ is a $T$-map. Finally use that composition of $T$-maps is a chain $T$-map.
\end{proof}

\begin{lemma} \label{T-map abelian}Let $Z$ be an abelian pro-$p$ group, i.e. a pro-$p$ $\mathbb{Z}_p$-module, and $Z_1$ a pro-$p$ subgroup of $Z$ of finite index. Then the canonical map
$$H_n(Z_1, \mathbb{Z}_p) \to H_n(Z, \mathbb{Z}_p)$$
is a $T$-map.
\end{lemma}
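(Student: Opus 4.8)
The plan is to apply the criterion of Lemma \ref{criterion}: writing $\iota\colon Z_1\hookrightarrow Z$ for the inclusion and $\iota_*\colon H_n(Z_1,\mathbb{Z}_p)\to H_n(Z,\mathbb{Z}_p)$ for the induced map, it suffices to show that $\iota_*\otimes_{\mathbb{Z}_p}\mathbb{Q}_p$ is an isomorphism. First I would record the easy structural facts: since $Z$ is pro-$p$ and $Z_1$ has finite index, $Z_1$ is open in $Z$, it is normal because $Z$ is abelian, and $m:=[Z:Z_1]$ is a power of $p$, hence invertible in $\mathbb{Q}_p$.

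The idea is then to invert $\iota_*\otimes\mathbb{Q}_p$ using the transfer map. There is a transfer (corestriction) homomorphism $\mathrm{tr}\colon H_n(Z,\mathbb{Z}_p)\to H_n(Z_1,\mathbb{Z}_p)$ attached to the open subgroup $Z_1\leq Z$, satisfying the two standard composition formulas $\iota_*\circ\mathrm{tr}=m\cdot\mathrm{id}_{H_n(Z,\mathbb{Z}_p)}$ and, since $Z_1$ is normal, $\mathrm{tr}\circ\iota_*=\sum_{g\in Z/Z_1}(c_g)_*$, where $(c_g)_*$ denotes the automorphism of $H_n(Z_1,\mathbb{Z}_p)$ induced by the conjugation $c_g\colon Z_1\to Z_1$, $x\mapsto gxg^{-1}$. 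This is the only place the hypothesis enters: $Z$ is abelian, so every $c_g$ is the identity map of $Z_1$, hence $(c_g)_*=\mathrm{id}$ and the second formula collapses to $\mathrm{tr}\circ\iota_*=m\cdot\mathrm{id}_{H_n(Z_1,\mathbb{Z}_p)}$. Applying the exact functor $-\otimes_{\mathbb{Z}_p}\mathbb{Q}_p$, both composites $(\iota_*\otimes\mathbb{Q}_p)(\mathrm{tr}\otimes\mathbb{Q}_p)$ and $(\mathrm{tr}\otimes\mathbb{Q}_p)(\iota_*\otimes\mathbb{Q}_p)$ equal multiplication by the unit $m\in\mathbb{Q}_p^{\times}$, so $\iota_*\otimes\mathbb{Q}_p$ is an isomorphism with inverse $m^{-1}(\mathrm{tr}\otimes\mathbb{Q}_p)$. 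By Lemma \ref{criterion}, $\iota_*$ is a $T$-map, which is the assertion.

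I expect the only delicate point to be having the transfer available in the right generality, namely for continuous homology of profinite groups and with the stated composition identities; this is classical (see \cite{book}), but if one prefers an explicit construction one can take a projective $\mathbb{Z}_p[[Z]]$-resolution $P_\bullet\to\mathbb{Z}_p$, observe that $\mathbb{Z}_p[[Z]]$ is free of rank $m$ as a $\mathbb{Z}_p[[Z_1]]$-module so that $P_\bullet$ also computes $H_*(Z_1,\mathbb{Z}_p)$, and define $\mathrm{tr}$ at chain level by summing over a transversal of $Z_1$ in $Z$; the composition identities are then routine, the conjugation action surfacing precisely in $\mathrm{tr}\circ\iota_*$. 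Note that no finite generation of $Z$ is used. An alternative route avoiding transfer is to run the Lyndon--Hochschild--Serre spectral sequence of $1\to Z_1\to Z\to Z/Z_1\to 1$: since $Z$ is abelian the action of $Z/Z_1$ on $H_*(Z_1,\mathbb{Z}_p)$ is trivial, and since $H_i(Z/Z_1,-)$ is annihilated by $|Z/Z_1|=m$ for $i\geq 1$ the terms $E^2_{i,j}$ with $i\geq 1$ are $\mathbb{Z}_p$-torsion; hence after $-\otimes_{\mathbb{Z}_p}\mathbb{Q}_p$ the spectral sequence degenerates and the resulting edge map, which is $\iota_*\otimes\mathbb{Q}_p$, is an isomorphism.
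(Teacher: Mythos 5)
Your proof is correct, and the transfer-based argument you lead with is a genuinely different route from the paper's. The paper simply invokes Lemma \ref{finitesubgroup} (whose proof runs the Lyndon--Hochschild--Serre spectral sequence for $1 \to Z_1 \to Z \to Z/Z_1 \to 1$ and kills the higher rows after $\otimes_{\mathbb{Z}_p}\mathbb{Q}_p$ because $Z/Z_1$ is a finite $p$-group), then observes that since $Z$ is abelian the $Z/Z_1$-action on $H_n(Z_1,\mathbb{Z}_p)$ is trivial so the coinvariants $H_0(Z/Z_1, H_n(Z_1,\mathbb{Z}_p))$ are just $H_n(Z_1,\mathbb{Z}_p)$, and finally applies Lemma \ref{criterion}. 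In other words, the ``alternative route'' you sketch at the end of your proposal is essentially the paper's own argument. Your main argument via the transfer is self-contained (no appeal to the earlier lemma) and gives the inverse $m^{-1}(\mathrm{tr}\otimes\mathbb{Q}_p)$ explicitly, at the modest cost of importing the existence of corestriction/transfer for open subgroups in continuous pro-$p$ homology together with the two composition identities $\iota_*\circ\mathrm{tr}=m\cdot\mathrm{id}$ and $\mathrm{tr}\circ\iota_*=\sum_g (c_g)_*$; this is available in the profinite setting, but it is slightly heavier machinery than the paper needs given that Lemma \ref{finitesubgroup} is already in hand. Both arguments correctly exploit that $Z$ abelian kills conjugation and that $[Z:Z_1]$ is a $p$-power, hence a unit in $\mathbb{Q}_p$, and you are right that neither argument requires $Z$ to be finitely generated, which matters for the application in Proposition \ref{prop-Tmap}.
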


\begin{proof}
By  Lemma \ref{finitesubgroup}, 
$H_n(Z, \mathbb{Z}_p) \otimes_{\mathbb{Z}_p} \mathbb{Q}_p \simeq H_0(Z/ Z_1, H_n(Z_1, \mathbb{Z}_p)) \otimes_{\mathbb{Z}_p} \mathbb{Q}_p$.  By the proof (it uses spectral sequence), this isomorphism is induced by the embedding of $Z_1$ in $Z$. But since $Z$ is abelian $H_0(Z/ Z_1, H_n(Z_1, \mathbb{Z}_p)) \simeq H_n(Z, \mathbb{Z}_p)$, hence the embedding of $Z_1$ in $Z$ induces an isomorphism 
 $$  H_n(Z_1, \mathbb{Z}_p) \otimes_{\mathbb{Z}_p} \mathbb{Q}_p \simeq H_n(Z, \mathbb{Z}_p) \otimes_{\mathbb{Z}_p} \mathbb{Q}_p$$
\noindent
Then we apply Lemma \ref{criterion}.

\end{proof}

\begin{proposition} \label{prop-Tmap}
Let $N$ be a pro-$p$ nilpotent group (not necessarily finitely generated) and $N_1$ be a pro-$p$ subgroup of finite index. Then, for each $i$, we have that the canonical map
$$H_i(N_1, \mathbb{Z}_p) \to H_i(N, \mathbb{Z}_p)$$
is a $T$-map.
\end{proposition}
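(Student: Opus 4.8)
The plan is to prove the statement in three moves: reduce to the case $N_1 \triangleleft N$; establish by induction on the nilpotency class a stronger statement about the conjugation action of $N/N_1$ on the homology of $N_1$; and then conclude using Lemmas~\ref{criterion} and~\ref{finitesubgroup}. For the reduction, let $N_2=\bigcap_{n\in N}nN_1n^{-1}$ be the normal core of $N_1$, a closed normal subgroup of $N$ of finite index with $N_2\le N_1$. Granting the proposition for normal subgroups and applying it to $N_2\triangleleft N$ and to $N_2\triangleleft N_1$ (note $N_1$ is again nilpotent), the maps $H_i(N_2,\mathbb{Z}_p)\to H_i(N,\mathbb{Z}_p)$ and $H_i(N_2,\mathbb{Z}_p)\to H_i(N_1,\mathbb{Z}_p)$ are $T$-maps. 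The first is the composite of the second with $H_i(N_1,\mathbb{Z}_p)\to H_i(N,\mathbb{Z}_p)$, so after applying $-\otimes_{\mathbb{Z}_p}\mathbb{Q}_p$ (where the first two become isomorphisms by Lemma~\ref{criterion}) the third map is an isomorphism, hence a $T$-map by Lemma~\ref{criterion}.

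The core is the following \emph{claim}: for $N$ nilpotent pro-$p$ of class $\le c$, $N_1\triangleleft N$ of finite index, and $C$ any trivial pro-$p$ $\mathbb{Z}_p[[N]]$-module, the finite $p$-group $N/N_1$ acts trivially, by conjugation, on $H_i(N_1,C)\otimes_{\mathbb{Z}_p}\mathbb{Q}_p$ for all $i$. This is proved by induction on $c$; the case $c=1$ is immediate since conjugation by elements of the abelian group $N$ is the identity on $N_1$ and on $C$. For the inductive step we may assume $N$ is non-abelian, so $Z:=Z(N)$ is closed, central and nontrivial and $\bar N:=N/Z$ has class $\le c-1$; set $Z_1:=Z\cap N_1$ (central in $N$) and $\overline{N_1}:=N_1/Z_1=N_1Z/Z\triangleleft\bar N$, of finite index. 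Conjugation by $N$ acts on the central extension $1\to Z_1\to N_1\to\overline{N_1}\to 1$, and trivially on $C$, hence on the Lyndon--Hochschild--Serre spectral sequence
$$
E^2_{pq}=H_p\big(\overline{N_1},\,H_q(Z_1,C)\big)\ \Longrightarrow\ H_{p+q}(N_1,C).
$$
Because $Z_1$ is central and $C$ is trivial, $H_q(Z_1,C)$ is a trivial pro-$p$ $\mathbb{Z}_p[[\bar N]]$-module; moreover the $N$-action on $E^2_{pq}$ kills $Z$ (central) and kills $N_1$ (inner automorphisms act trivially on the homology of $\overline{N_1}$ with trivial coefficients), hence factors through the conjugation action of $\bar N/\overline{N_1}$ on $H_p(\overline{N_1},H_q(Z_1,C))$. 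By the inductive hypothesis applied to $(\bar N,\overline{N_1},H_q(Z_1,C))$ this action on $E^2_{pq}\otimes_{\mathbb{Z}_p}\mathbb{Q}_p$ is trivial. Since $\mathbb{Q}_p$ is $\mathbb{Z}_p$-flat, $-\otimes_{\mathbb{Z}_p}\mathbb{Q}_p$ turns the above into a convergent first-quadrant spectral sequence carrying an $N/N_1$-action that is trivial on $E^2$, hence on every page and on $E^\infty$; as $E^\infty$ is the associated graded of a finite filtration of $H_n(N_1,C)\otimes_{\mathbb{Z}_p}\mathbb{Q}_p$, the $N/N_1$-action on the latter is unipotent. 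A unipotent automorphism of finite order of a $\mathbb{Q}_p$-vector space is the identity (it is simultaneously unipotent and semisimple, its minimal polynomial dividing the separable polynomial $x^m-1$), so $N/N_1$ acts trivially on $H_n(N_1,C)\otimes_{\mathbb{Z}_p}\mathbb{Q}_p$, completing the induction.

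To conclude, let $N_1\triangleleft N$ be of finite index. By Lemma~\ref{finitesubgroup} with $G=N$, $G_0=N_1$, $V=\mathbb{Z}_p$, the inclusion $N_1\hookrightarrow N$ induces $H_i(N_1,\mathbb{Z}_p)\otimes_{\mathbb{Z}_p}\mathbb{Q}_p\to H_0(N/N_1,H_i(N_1,\mathbb{Z}_p))\otimes_{\mathbb{Z}_p}\mathbb{Q}_p\xrightarrow{\ \sim\ }H_i(N,\mathbb{Z}_p)\otimes_{\mathbb{Z}_p}\mathbb{Q}_p$, the second isomorphism being the one of Lemma~\ref{finitesubgroup} (induced by the inclusion, as used in the proof of Lemma~\ref{T-map abelian}). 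By the claim with $C=\mathbb{Z}_p$ the $N/N_1$-action on $H_i(N_1,\mathbb{Z}_p)\otimes_{\mathbb{Z}_p}\mathbb{Q}_p$ is trivial, so the coinvariants equal the whole module and $H_i(N_1,\mathbb{Z}_p)\otimes_{\mathbb{Z}_p}\mathbb{Q}_p\to H_i(N,\mathbb{Z}_p)\otimes_{\mathbb{Z}_p}\mathbb{Q}_p$ is an isomorphism; Lemma~\ref{criterion} then gives that $H_i(N_1,\mathbb{Z}_p)\to H_i(N,\mathbb{Z}_p)$ is a $T$-map. The step I expect to require the most care is the spectral-sequence argument for pro-$p$ groups that need not be finitely generated: one must ensure that the Lyndon--Hochschild--Serre spectral sequence for profinite homology with profinite coefficients exists, is natural enough for the conjugation $N$-action to be defined on it, converges as a first-quadrant spectral sequence, and is compatible with the exact functor $-\otimes_{\mathbb{Z}_p}\mathbb{Q}_p$.
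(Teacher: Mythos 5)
Your proposal is correct, but it takes a genuinely different route from the paper's. The paper also inducts on the nilpotency class $c$ and also uses the LHS spectral sequence for the central extension $1\to Z\to N\to N/Z\to 1$, but it does \emph{not} reduce to the normal case and does \emph{not} argue via the conjugation action. Instead it works directly with the arbitrary finite-index subgroup $N_1$: it sets $Z_1:=Z\cap N_1$, writes down the two spectral sequences $\widetilde E^2_{i,j}=H_i(N_1/Z_1,\mathbb{Z}_p)\widehat\otimes_{\mathbb{Z}_p}H_j(Z_1,\mathbb{Z}_p)\Rightarrow H_{i+j}(N_1,\mathbb{Z}_p)$ and $E^2_{i,j}=H_i(N/Z,\mathbb{Z}_p)\widehat\otimes_{\mathbb{Z}_p}H_j(Z,\mathbb{Z}_p)\Rightarrow H_{i+j}(N,\mathbb{Z}_p)$, observes that the induced map $\widetilde E^2\to E^2$ is a $T$-map term by term by Lemma~\ref{prod} combined with the induction hypothesis and Lemma~\ref{T-map abelian}, propagates this through the pages to $E^\infty$ using Lemma~\ref{hom-chain}, and finishes by the convergence filtration. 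Your proof instead first passes to the normal core $N_2\le N_1$, then proves the a priori stronger statement that for $N_1\triangleleft N$ the conjugation action of $N/N_1$ on $H_i(N_1,C)\otimes_{\mathbb{Z}_p}\mathbb{Q}_p$ is trivial (by the same central-extension induction, plus the fact that a finite-order unipotent operator on a $\mathbb{Q}_p$-space is the identity), and concludes from the coinvariants description of Lemma~\ref{finitesubgroup}. Both hinge on the same structural facts (LHS spectral sequence for possibly non-finitely-generated pro-$p$ groups; exactness of $-\otimes_{\mathbb{Z}_p}\mathbb{Q}_p$), and the caveats you flag at the end apply equally to the paper's proof. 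What each buys: the paper's argument is shorter because Lemmas~\ref{prod} and~\ref{hom-chain} were built precisely for it; yours produces the sharper intermediate output that the rationalized conjugation action is trivial, at the small extra cost of the normal-core reduction and the finite-order-unipotent observation.
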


\begin{proof} 
We induct on the nilpotency class of $N$.  The first step is done in Lemma \ref{T-map abelian}. Let $Z$ be the center of $N$ and $Z_1 = Z \cap N_1$. Consider the spectral sequence
$$E^2_{i,j} = H_i(N/Z, H_j(Z, \mathbb{Z}_p)) \simeq   H_i(N/Z, \mathbb{Z}_p) \otimes_{\mathbb{Z}
_p} H_j(Z, \mathbb{Z}_p)$$ that converges to $H_{n}(N, \mathbb{Z}_p)$
and the spectral sequence
$$\widetilde{E}^2_{i,j} = H_i(N_1/Z_1, H_j(Z_1, \mathbb{Z}_p)) \simeq   H_i(N_1/Z_1, \mathbb{Z}_p) \otimes_{\mathbb{Z}
_p} H_j(Z_1, \mathbb{Z}_p)$$ that converges to $H_{n}(N_1, \mathbb{Z}_p)$.

By induction, the canonical map $H_i(N_1/ Z_1, \mathbb{Z}_p) \to H_i(N/ Z, \mathbb{Z}_p)$ is a $T$-map and since $Z_1$ and $Z$ are abelian we have that $H_i(Z_1, \mathbb{Z}_p) \to H_i(Z, \mathbb{Z}_p)$ is a $T$-map. Then, by Lemma \ref{prod}, we have that 
the canonical map $\widetilde{E}^2_{i,j} \to E^2_{i,j}$ is a $T$-map and, by taking iterating homologies and by Lemma \ref{hom-chain}, we get that the canonical map $\widetilde{E}^{\infty}_{i,j} \to E^{\infty}_{i,j}$ is a $T$-map. Then by the convergence of the spectral sequences we get that $$H_i(N_1, \mathbb{Z}_p) \to H_i(N, \mathbb{Z}_p)$$
is a $T$-map. 
 \end{proof}

\section{Proof of Theorem \ref{principal}}

Let $1 \to N \to G \to Q \to 1$ be an short exact sequence of pro-$p$ groups, where $G$ is finitely generated, $N$ is nilpotent of class $c$ and $Q$ is abelian. Let $N'$ be the commutator subgroup of $N$ and suppose that the metabelian quotient $G/N'$ of $G$ is of type  $FP_{2d}$ where $d = mc$. We want to prove that  
$$
\sup_{M\in \mathcal{A}} \dim_{\mathbb{Q}_p} H_i(M,\mathbb{Z}_p)\otimes_{\mathbb{Z}_p}\mathbb{Q}_p < \infty, ~~~~ \forall ~ 0\leq i \leq m ,
$$
where $M$ runs through the class $\mathcal{A}$ of all finite index subgroups of $G$.

\medskip

Let $G_1$ be a pro-$p$ subgroup of finite index in $G$, let $Q_1$ be the image of $G_1$ in $Q$ and $N_1:= N \cap G_1$. Then $[Q:Q_1]<\infty$ and $[N:N_1]<\infty$. 
From the short exact sequence
$$ 1  \to N_1 \to G_1 \to Q_1 \to 1,$$
we obtain the Lyndon-Hochschild-Serre spectral sequence in pro-$p$ homology
\begin{equation}\label{LHS-1}   
E^2_{r,s}= H_r(Q_1,H_s(N_1,\mathbb{Z}_p)) \Longrightarrow H_{r+s}(G_1,\mathbb{Z}_p)\medskip
\end{equation}    
Then
\medskip \begin{equation}\label{soma1} 
\begin{array}{ll}
\dim_{\mathbb{Q}_p} H_j(G_1,\mathbb{Z}_p) \otimes_{\mathbb{Z}_p}\mathbb{Q}_p 
& = 
\sum^j_{r=0} \dim_{\mathbb{Q}_p} (E^{\infty}_{r,j-r} \otimes_{\mathbb{Z}_p}\mathbb{Q}_p ) \medskip \\
& \leq 
\sum^j_{r=0}\dim_{\mathbb{Q}_p} (E^{2}_{r,j-r} \otimes_{\mathbb{Z}_p}\mathbb{Q}_p).
\end{array}
\end{equation}\medskip

Since $[N:N_1]<\infty$, by Proposition \ref{prop-Tmap}, the map  $H_{j-r} (N_1, \mathbb{Z}_p) \to H_{j-r} (N, \mathbb{Z}_p)$, induced by the inclusion $N_1 \to N$, is a $T$-map. This implies
\begin{equation}\label{N-N1}
\dim_{\mathbb{Q}_p} E^{2}_{r,j-r} \otimes_{\mathbb{Z}_p}\mathbb{Q}_p = \dim_{\mathbb{Q}}H_r(Q_1,H_{j-r}(N,\mathbb{Z}_p)) \otimes_{\mathbb{Z}_p}\mathbb{Q}_p.
\end{equation}

\medskip Since $[Q:Q_1]< \infty$, there is $t>0$ such that $Q^{p^t}=\overline{\langle q^{p^t}~|~q\in Q\rangle} \subset Q_1$ and so, by Lemma \ref{finitesubgroup}, 
$$
H_r(Q_1,L)\otimes_{\mathbb{Z}_p}\mathbb{Q}_p ~\cong~ H_0(Q_1/Q^{p^t}, H_r(Q^{p^t},L)) \otimes_{\mathbb{Z}_p}\mathbb{Q}_p \cong H_r(Q^{p^t},L)_{Q_1/Q^{p^t}} \otimes_{\mathbb{Z}_p}\mathbb{Q}_p ,
$$
for any pro-$p$ $\mathbb{Z}_p[[Q_1]]$-module L. Hence
$$\dim_{\mathbb{Q}_p} H_r(Q_1,L)\otimes_{\mathbb{Z}_p}\mathbb{Q}_p \leq \dim_{\mathbb{Q}_p} H_r(Q^{p^t},L) \otimes_{\mathbb{Z}_p}\mathbb{Q}_p.$$
So, applying for $L=H_{j-r}(N,\mathbb{Z}_p)$, we get
$$\dim_{\mathbb{Q}_p} H_r(Q_1,H_{j-r}(N,\mathbb{Z}_p))\otimes_{\mathbb{Z}_p}\mathbb{Q}_p \leq \dim_{\mathbb{Q}_p} H_r(Q^{p^t},H_{j-r}(N,\mathbb{Z}_p)) \otimes_{\mathbb{Z}_p}\mathbb{Q}_p.
$$

\medskip Thus, from (\ref{soma1}), (\ref{N-N1}) and above, one gets
$$
\begin{array}{ll}
\dim_{\mathbb{Q}_p} H_j(G_1,\mathbb{Z}_p) \otimes_{\mathbb{Z}_p}\mathbb{Q}_p 
& \leq 
\sum^j_{r=0}\dim_{\mathbb{Q}_p} (E^{2}_{r,j-r} \otimes_{\mathbb{Z}_p}\mathbb{Q}_p) \medskip \\
& = 
\sum_{r=0}^{j} \dim_{\mathbb{Q}_p} (H_r(Q_1,H_{j-r}(N,\mathbb{Z}_p))\otimes_{\mathbb{Z}_p}\mathbb{Q}_p )\medskip \\
& \leq 
\sum_{r=0}^{j} \dim_{\mathbb{Q}_p} (H_r(Q^{p^t},H_{j-r}(N,\mathbb{Z}_p)) \otimes_{\mathbb{Z}_p}\mathbb{Q}_p). 
\end{array}
$$
So, to prove that
$$
\sup_{G_1\in \mathcal{A}}\dim_{\mathbb{Q}_p} H_j(G_1,\mathbb{Z}_p) \otimes_{\mathbb{Z}_p}\mathbb{Q}_p 
< \infty, ~~~~ \mbox{for all}~~ 0\leq j \leq m,
$$
it is sufficient to prove that 
$$
\sup_{t\geq 1} \dim_{\mathbb{Q}_p} H_r(Q^{p^t}, H_{j-r}(N,\mathbb{Z}_p)) \otimes_{\mathbb{Z}_p}\mathbb{Q}_p 
< \infty, ~~~~ \mbox{for all}~~ 0\leq r\leq j \leq m.
$$
\medskip
Now, by hypothesis, $G/N'$ is of type $FP_{2d}$. Note that $G/N'$ is a metabelian pro-$p$ group where $N/N'$ is an abelian normal pro-$p$ subgroup with finitely generated abelian pro-$p$ quotient $G/N = Q$. Then, by {\cite[Theorem D]{Desi1}}, we have that $$ \widehat{\bigotimes}^s_{\mathbb{Z}_p}N/N' \hbox{ is a finitely generated pro-p }$$ \begin{equation} \label{imp}  \mathbb{Z}_p[[Q]]\hbox{-module via the diagonal Q-action, for }0\leq s \leq 2d \end{equation} Thus, by Lemma \ref{2m to m}, 
\begin{equation}\label{nilpotent-tensors}
\sup_{t\geq 1} \dim_{\mathbb{Q}_p} ( \widehat{\bigotimes}^s_{\mathbb{Z}_p} N/N' )\otimes_{\mathbb{Z}_p[[Q^{p^t}]]}\mathbb{Q}_p  < \infty, ~~ \mbox{for}~~ 0\leq s\leq d.
\end{equation}

 By the remark after Theorem \ref{shift dimension}  applied for  $L =  H_j(N,\mathbb{Z}_p)$, the existence of an appropriate $L_0$ given by the remark after  Theorem \ref{filtration} and (\ref{imp}), we obtain that  if 
\begin{equation}\label{nilpotent-homology}
\sup_{t\geq 1} \dim_{\mathbb{Q}_p} H_j(N,\mathbb{Z}_p) \otimes_{\mathbb{Z}_p[[Q^{p^t}]]}\mathbb{Q}_p < \infty, ~~~ \mbox{for} ~~ 0\leq j\leq m
\end{equation}
then
$$
\sup_{t\geq 1} \dim_{\mathbb{Q}_p} H_r(Q^{p^t}, H_{j}(N,\mathbb{Z}_p)) \otimes_{\mathbb{Z}_p}\mathbb{Q}_p 
< \infty, ~~~~ \mbox{for all}~~ r.
$$
Thus, proving (\ref{nilpotent-homology}) and Theorem \ref{filtration} suffices to establish the theorem. 
We will show  Theorem \ref{filtration}  in the next section and (\ref{nilpotent-homology}) in Section \ref{final1}.


\section{Homology of nilpotent pro-$p$ subgroups}

 For finitely generated abelian-by-abelian pro-p groups $G$, it is well known that if  $1\to A \to G \to Q\to 1$ is a short exact sequence of pro-$p$ groups such that $A$ and $Q$ are abelian pro-$p$ groups, then the homology groups $H_j(A,\mathbb{Z}_p)$ are finitely generated as a pro-$p$ $\mathbb{Z}_p[[Q]]$-module, for $1\leq j\leq k$, if and only if the completed tensor products $\widehat{\bigotimes}^j_{\mathbb{Z}_p} A$ are finitely generated as a diagonal $\mathbb{Z}_p[[Q]]$-module, for $1\leq j\leq k$. And also this is equivalent to the metabelian pro-$p$ group $G$ being of type $FP_k$ (see \cite{Desi1}).  

In this section we will see in Theorem \ref{filtration} that for nilpotent-by-abelian pro-$p$ groups, if  $ 1 \to N \to G \to Q\to 1$ is a short exact sequence of pro-$p$ groups such that $N$ is a nilpotent pro-$p$ group of class $c$ and $Q$ an abelian pro-$p$ group, then the homology groups $H_j(N,\mathbb{Z}_p)$ considered as pro-$p$ $\mathbb{Z}_p[[Q]]$-modules are related with the completed tensor products $\widehat{\bigotimes}^s_{\mathbb{Z}_p} N/N'$, for $0\leq s\leq cj$, considered as a $\mathbb{Z}_p[[Q]]$-module via the diagonal $Q$-action. The statement of Theorem \ref{filtration} was based on the result \cite[Corollary 2.6]{Mirzaii-Mokari} for abstract groups, however the proof for pro-$p$ groups here differs from that for abstract groups. 

From now on, assume that $ 1 \to N \to G \to Q\to 1$ is a short exact sequence of pro-$p$ groups such that $N$ is a nilpotent pro-$p$ group of class $c$ and $Q$ an abelian pro-$p$ group. Consider the lower central series of $N$,
$$
1=\gamma_{c+1}(N) \subset \gamma_c(N)\subset \cdots \subset \gamma_2(N)\subset\gamma _1(N)=N.
$$
From the short exact sequence
$$
0\to \gamma_c(N) \to N \to N/\gamma_c(N) \to 0,
$$
we obtain the Lyndon-Hochschild-Serre spectral sequence 
\begin{equation}
    \label{LHS-gamma_cN}
    E^2_{r,s}=H_r(N/\gamma_c(N),H_s(\gamma_c(N),\mathbb{Z}_p)) \Longrightarrow H_{r+s}(N,\mathbb{Z}_p).
\end{equation}

Since $\gamma_{c+1}(N)=[\gamma_c(N),N]=1$, we have $\gamma_c(N)\subset Z(N)$, hence
$$
  E^2_{r,s}=H_r(N/\gamma_c(N), \mathbb{Z}_p) \otimes_{\mathbb{Z}_p} H_s(\gamma_c(N),\mathbb{Z}_p))$$

Moreover, we have a natural action of $Q$ on $N$ that provides a natural action of $Q$ on $H_s(\gamma_c(N),\mathbb{Z}_p)$ and $H_r(N/\gamma_c(N),\mathbb{Z}_p)$. From this we obtain a natural action of $Q$ on the spectral sequence (\ref{LHS-gamma_cN}). This means that the groups $E^2_{r,s}$ are pro-$p$ $\mathbb{Z}_p[[Q]]$-modules and the differentials $d^2_{r,s}$ are homomorphisms of pro-$p$ $\mathbb{Z}_p[[Q]]$-modules. So, $H_n(N, \mathbb{Z}_p)$ has a filtration of pro-$p$  $\mathbb{Z}_p[[Q]]$-modules  where each quotient of the filtration is a subquotient of $H_r(N/\gamma_c(N)) \widehat{\otimes}_{\mathbb{Z}_p} H_s(\gamma_c(N),\mathbb{Z}_p)$ with $r + s = n$.Using induction on $c$ we conclude that $H_n(N, \mathbb{Z}_p)$ has a filtration of pro-$p$  $\mathbb{Z}_p[[Q]]$-modules  where each quotient of the filtration is a subquotient of $$H_{r_1}(N/\gamma_2(N), \mathbb{Z}_p) \widehat{\otimes}_{\mathbb{Z}_p} H_{r_2}(\gamma_2(N)/ \gamma_3(N),\mathbb{Z}_p)  \widehat{\otimes}_{\mathbb{Z}_p} \ldots  \widehat{\otimes}_{\mathbb{Z}_p}  H_{r_c}(\gamma_c(N),\mathbb{Z}_p)$$ with $r_1 +  \ldots + r_c = n$.

 Let $A_i = \gamma_i(N)/ \gamma_{i+1}(N)$, $C_i$ the $\mathbb{Z}_p$-torsion part of $A_i$ and $B_i = A_i / C_i$. Then refining the filtration of $N$ using as quotients all $B_i$  and $C_i$, we get that 
 $H_n(N, \mathbb{Z}_p)$ has a filtration of pro-$p$  $\mathbb{Z}_p[[Q]]$-modules  where each quotient of the filtration is a subquotient of $$H_{b_1}(B_1, \mathbb{Z}_p) \widehat{\otimes}_{\mathbb{Z}_p} H_{c_1}(C_1, \mathbb{Z}_p) \widehat{\otimes}_{\mathbb{Z}_p}  \ldots \widehat{\otimes}_{\mathbb{Z}_p} H_{b_c}(B_c, \mathbb{Z}_p)\widehat{\otimes}_{\mathbb{Z}_p} H_{c_c}(C_c, \mathbb{Z}_p) $$ with $b_1 + c_1 +  \ldots + b_c + c_c = n$.

Since each $ H_{c_i}(C_i, \mathbb{Z}_p)$ is $\mathbb{Z}_p$-torsion for $c_i \geq 1$, we get that $H_n(N, \mathbb{Z}_p) \otimes_{\mathbb{Z}_p} \mathbb{Q}_p$ has a filtration  where each quotient is a subquotient of $$(H_{b_1}(B_1, \mathbb{Z}_p) \widehat{\otimes}_{\mathbb{Z}_p}  \ldots \widehat{\otimes}_{\mathbb{Z}_p} H_{b_c}(B_c, \mathbb{Z}_p)) \otimes_{\mathbb{Z}_p} \mathbb{Q}_p  $$ with $b_1  +  \ldots + b_c  \leq n$.

Since each $B_i$ is $\mathbb{Z}_p$-torsion-free, we have that $H_{b_i}(B_i, \mathbb{Z}_p) \simeq \widehat{\wedge}_{\mathbb{Z}_p} ^{b_i} B_i$.  Note that,  since $A_1=N/N'$, each $A_i$ is a subquotient of $\widehat{\otimes}_{\mathbb{Z}_p} ^{i} A_1$, hence $B_i$ is a subquotient of  $\widehat{\otimes}_{\mathbb{Z}_p} ^{i} A_1$ and 
$\widehat{\wedge}_{\mathbb{Z}_p} ^{b_i} B_i$ is a subquotient of  $\widehat{\otimes}_{\mathbb{Z}_p}  ^{i b_i} A_1$
Then
$$
H_{b_1}(B_1, \mathbb{Z}_p) \widehat{\otimes}_{\mathbb{Z}_p}  \ldots \widehat{\otimes}_{\mathbb{Z}_p} H_{b_c}(B_c, \mathbb{Z}_p)$$
is a subquotient of $\widehat{\otimes}_{\mathbb{Z}_p}  ^{b_1 + 2 b_2 + \ldots + c b_c} A_1$. Hence 
 $H_n(N, \mathbb{Z}_p) \otimes_{\mathbb{Z}_p} \mathbb{Q}_p$ has a filtration  where each quotient is a subquotient of some  $(\widehat{\otimes}_{\mathbb{Z}_p}  ^{b_1 + 2 b_2 + \ldots + c b_c} A_1) \otimes_{\mathbb{Z}_p} \mathbb{Q}_p$ with $b_1  +  \ldots + b_c  \leq n$, and so $b_1 + 2 b_2 + \ldots + c b_c \leq c n$. Since spectral sequence is a natural construction, we deduce the following result where $n$ is substituted with $j$.

\begin{theorem} \label{filtration}
Let $  1 \to N\to G\to Q\to 1$ be an exact sequence of pro-$p$ groups, where $N$ is a nilpotent pro-$p$ group of class $c$ and $Q$ is an abelian pro-$p$ group. Then there exists a natural filtration of $\mathbb{Z}_p[[Q]]$-submodules of $H_j(N,\mathbb{Z}_p)$,
$$
0=E_0 \subseteq E_1 \subseteq \cdots \subseteq E_{l-1}\subseteq E_l=H_j(N,\mathbb{Z}_p),
$$
such that for any $0\leq k \leq l$, $(E_k/E_{k-1}) \otimes_{\mathbb{Z}_p}\mathbb{Q}_p$ is a natural subquotient from the set
$$
\left\{ (\widehat{\bigotimes}^s_{\mathbb{Z}_p}  N/N') \otimes_{\mathbb{Z}_p}\mathbb{Q}_p\right\}_{0\leq s\leq cj},
$$
where $\widehat{\bigotimes}^s_{\mathbb{Z}_p}  N/N'$ is considered as a $\mathbb{Z}_p[[Q]]$-module via the diagonal $Q$-action.
\end{theorem}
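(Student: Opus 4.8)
The plan is to build the filtration of $H_j(N,\mathbb{Z}_p)$ from a sequence of Lyndon--Hochschild--Serre spectral sequences attached to the lower central series of $N$, and then to track what the associated graded pieces become after tensoring with $\mathbb{Q}_p$. Concretely, I would first use the central extension $0 \to \gamma_c(N) \to N \to N/\gamma_c(N) \to 0$, noting that $\gamma_c(N) \subset Z(N)$ so the $E^2$-page of the corresponding spectral sequence is $H_r(N/\gamma_c(N),\mathbb{Z}_p) \widehat{\otimes}_{\mathbb{Z}_p} H_s(\gamma_c(N),\mathbb{Z}_p)$. The $Q$-action on $N$ induces a compatible $Q$-action on the whole spectral sequence, so each $E^2_{r,s}$, and hence each $E^\infty_{r,s}$, is a pro-$p$ $\mathbb{Z}_p[[Q]]$-module, and the convergence of the spectral sequence gives $H_n(N,\mathbb{Z}_p)$ a finite filtration by $\mathbb{Z}_p[[Q]]$-submodules whose graded pieces are the $E^\infty_{r,s}$ with $r+s=n$; each such piece is a $\mathbb{Z}_p[[Q]]$-subquotient of $E^2_{r,s} = H_r(N/\gamma_c(N),\mathbb{Z}_p)\widehat{\otimes}_{\mathbb{Z}_p} H_s(\gamma_c(N),\mathbb{Z}_p)$.

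Next I would iterate this construction down the lower central series. Applying induction on $c$ to the quotient $N/\gamma_c(N)$ (which is nilpotent of class $c-1$ with the same induced $Q$-action), and refining the composite filtration, I obtain that $H_n(N,\mathbb{Z}_p)$ is filtered with graded pieces that are $\mathbb{Z}_p[[Q]]$-subquotients of $H_{r_1}(N/\gamma_2(N),\mathbb{Z}_p)\widehat{\otimes}_{\mathbb{Z}_p}\cdots\widehat{\otimes}_{\mathbb{Z}_p} H_{r_c}(\gamma_c(N),\mathbb{Z}_p)$ with $r_1+\cdots+r_c=n$. I then refine once more by splitting each abelian factor $A_i = \gamma_i(N)/\gamma_{i+1}(N)$ into its $\mathbb{Z}_p$-torsion part $C_i$ and its torsion-free quotient $B_i = A_i/C_i$, both of which inherit $Q$-actions; using the K\"unneth-type decomposition for $H_*$ of an abelian pro-$p$ group that is a product, this gives graded pieces that are subquotients of tensor products of the $H_{b_i}(B_i,\mathbb{Z}_p)$ and $H_{c_i}(C_i,\mathbb{Z}_p)$.

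Now I pass to $\mathbb{Q}_p$-coefficients. Since $H_{c_i}(C_i,\mathbb{Z}_p)$ is $\mathbb{Z}_p$-torsion for $c_i \geq 1$, after applying the exact functor $-\otimes_{\mathbb{Z}_p}\mathbb{Q}_p$ only the terms with all $c_i = 0$ survive, so $H_n(N,\mathbb{Z}_p)\otimes_{\mathbb{Z}_p}\mathbb{Q}_p$ is filtered with graded pieces that are subquotients of $(H_{b_1}(B_1,\mathbb{Z}_p)\widehat{\otimes}_{\mathbb{Z}_p}\cdots\widehat{\otimes}_{\mathbb{Z}_p} H_{b_c}(B_c,\mathbb{Z}_p))\otimes_{\mathbb{Z}_p}\mathbb{Q}_p$ with $b_1+\cdots+b_c \leq n$. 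Since each $B_i$ is $\mathbb{Z}_p$-torsion-free we have $H_{b_i}(B_i,\mathbb{Z}_p)\cong \widehat{\bigwedge}^{b_i}_{\mathbb{Z}_p} B_i$ (by Lemma~\ref{abelian}(a) up to $\mathbb{Q}_p$, which is all we need). Finally, because $A_1 = N/N'$ and each $A_i$ is a $\mathbb{Z}_p[[Q]]$-subquotient of $\widehat{\bigotimes}^i_{\mathbb{Z}_p} A_1$ (spanned by iterated commutators), $B_i$ and hence $\widehat{\bigwedge}^{b_i}_{\mathbb{Z}_p} B_i$ is a subquotient of $\widehat{\bigotimes}^{i b_i}_{\mathbb{Z}_p} A_1$, so the tensor product above is a subquotient of $\widehat{\bigotimes}^{b_1 + 2b_2 + \cdots + c b_c}_{\mathbb{Z}_p} A_1$; and $b_1+\cdots+b_c \leq n$ forces $b_1 + 2b_2 + \cdots + cb_c \leq cn$. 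Replacing $n$ by $j$ and invoking naturality of all constructions used yields the theorem.

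The main obstacle is the bookkeeping with $\widehat{\otimes}$ in the pro-$p$ (rather than abstract) setting: one must check that the spectral sequences, the K\"unneth-type splittings for $E^2$, and the passage through torsion subgroups all remain valid for possibly \emph{non-finitely-generated} pro-$p$ modules (since $\gamma_i(N)$ and $N/\gamma_i(N)$ need not be finitely generated), and that $Q$-equivariance is preserved at every stage so that the filtration is genuinely by $\mathbb{Z}_p[[Q]]$-submodules. The completed tensor product $\widehat{\otimes}_{\mathbb{Z}_p}$ is only right exact, so some care is needed when claiming a subquotient of $\widehat{\bigotimes}^i_{\mathbb{Z}_p} A_1$ from a subquotient relation $A_i \twoheadleftarrow (\text{submodule of }\widehat{\otimes}^i A_1)$; but since we only ever need the conclusion after $-\otimes_{\mathbb{Z}_p}\mathbb{Q}_p$, these issues dissolve, as tensoring with $\mathbb{Q}_p$ is exact and kills all torsion ambiguities.
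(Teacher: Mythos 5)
Your proposal follows essentially the same route as the paper: the LHS spectral sequence for $0\to\gamma_c(N)\to N\to N/\gamma_c(N)\to 0$ with $Q$-equivariance, induction on the nilpotency class to reduce to tensor products of homology of the lower-central quotients, splitting each $A_i=\gamma_i(N)/\gamma_{i+1}(N)$ into torsion $C_i$ and torsion-free $B_i$, discarding the torsion contributions after $\otimes_{\mathbb{Z}_p}\mathbb{Q}_p$, identifying $H_{b_i}(B_i,\mathbb{Z}_p)$ with $\widehat{\bigwedge}^{b_i}B_i$, and bounding $b_1+2b_2+\cdots+cb_c\leq cn$ via the surjection $\widehat{\bigotimes}^i_{\mathbb{Z}_p}(N/N')\twoheadrightarrow A_i$. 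The argument and conclusions match the paper's proof.
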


\noindent \textbf{Remark.} Suppose that in the above theorem  each $\mathbb{Z}_p[[Q]]$-module $\widehat{\bigotimes}^s_{\mathbb{Z}_p}  N/N'$ is finitely generated for $s \leq cj$.
Fix $k$ and corresponding $s$ such that $(E_k/ E_{k-1}) \otimes_{\mathbb{Z}_p}\mathbb{Q}_p$ is a subquotient of 
 $ (\widehat{\bigotimes}^s_{\mathbb{Z}_p}  N/N') \otimes_{\mathbb{Z}_p}\mathbb{Q}_p$ i.e.  $$(E_k/ E_{k-1}) \otimes_{\mathbb{Z}_p}\mathbb{Q}_p \simeq V_k \otimes_{\mathbb{Z}_p}\mathbb{Q}_p$$ where $V_k$ is a subquotient of $(\widehat{\bigotimes}^s_{\mathbb{Z}_p}  N/N')$, so $V_k$ is 
 a finitely generated $\mathbb{Z}_p[[Q]]$-module (note $ \otimes_{\mathbb{Z}_p}\mathbb{Q}_p$ is an exact functor, so commutes with subquotients).
 Then there is  a finitely generated pro-$p$ $\mathbb{Z}_p[[Q]]$-submodule $\overline{L}_k$ of $E_k/ E_{k-1}$ such that  the embedding of  $\overline{L}_k \to E_k/ E_{k-1}$ is a $T$-map, i.e. induces an isomorphism  \begin{equation} \label{novo1} \overline{L}_k  \otimes_{\mathbb{Z}_p}\mathbb{Q}_p \simeq (E_k/ E_{k-1})  \otimes_{\mathbb{Z}_p}\mathbb{Q}_p. \end{equation}
 Then  we can find a finitely generated $\mathbb{Z}_p[[Q]]$-submodule $L_k$ of $E_k \subseteq H_j(N,\mathbb{Z}_p)$ such that 
 $\overline{L}_k$  is the image of $L_k$ in $E_k/ E_{k-1}$.
 
  Define $L_0$ as the $\mathbb{Z}_p[[Q]]$-submodule of $H_j(N,\mathbb{Z}_p)$ generated by all $L_k$, where $k \leq l$. 
 Then $L_0$ is finitely generated as $\mathbb{Z}_p[[Q]]$-module and, since (\ref{novo1}) holds for all $k \leq l$,   the inclusion map $L_0 \to H_j(N,\mathbb{Z}_p)$ induces an isomorphism $$L_0  \otimes_{\mathbb{Z}_p}\mathbb{Q}_p \simeq H_j(N,\mathbb{Z}_p) \otimes_{\mathbb{Z}_p}\mathbb{Q}_p,$$ i.e. the inclusion map $L_0 \to H_j(N,\mathbb{Z}_p)$   is a $T$-map.

\section{Coming back to the proof of Theorem \ref{principal}} \label{final1}

As indicated in (\ref{nilpotent-homology}), to complete the proof of Theorem \ref{principal} we must demonstrate that
$$
\sup_{t\geq 1} \dim_{\mathbb{Q}_p} H_j(N,\mathbb{Z}_p) \otimes_{\mathbb{Z}_p[[Q^{p^t}]]} \mathbb{Q}_p < \infty, ~~~ \mbox{for}~~ 0\leq j\leq m . 
$$

By Theorem \ref{filtration}, $H_j(N,\mathbb{Z}_p)$ has a natural filtration of $\mathbb{Z}_p[[Q]]$-modules
$$
0=E_0\subseteq E_1 \subseteq \cdots \subseteq E_{l-1}\subseteq E_l=H_j(N,\mathbb{Z}_p)
$$
such that, for any $0\leq k\leq l$, $(E_k/E_{k-1})\otimes_{\mathbb{Z}_p}\mathbb{Q}_p$ is a natural subquotient from the set 
$$
\left\{ (\widehat{\bigotimes}^s_{\mathbb{Z}_p} N/N') \otimes_{\mathbb{Z}_p} \mathbb{Q}_p \right\}_{0\leq s\leq cj},
$$
where $\widehat{\bigotimes}^s_{\mathbb{Z}_p} N/N'$ is considered as a $\mathbb{Z}_p[[Q]]$-module via the diagonal $Q$-action.

Now, remember we have concluded in (\ref{nilpotent-tensors}) that, from the hypothesis which $G/N'$ is of type $FP_{2d}$, where $d=cm$, by applying \cite[Theorem D]{Desi1} and Lemma \ref{2m to m}, we obtain 
$$
\sup_{t\geq 1} \dim_{\mathbb{Q}_p} (\widehat{\bigotimes}^s_{\mathbb{Z}_p} N/N') \otimes_{\mathbb{Z}_p[[Q^{p^t}]]} \mathbb{Q}_p < \infty, ~~~\mbox{for}~~ 0\leq s\leq d.
$$
Here it is worthwhile to emphasize that, comparing indexes, we need to consider $j \leq m$.

Again, since $Q$ is a finitely generated abelian pro-$p$ group and $\widehat{\bigotimes}^s_{\mathbb{Z}_p} N/N'$ is a finitely generated $\mathbb{Z}_p[[Q]]$-module via the diagonal $Q$-action, for $0\leq s \leq d$ (\cite[Theorem D]{Desi1}), by Lemma \ref{subquotients}, we obtain 
$$
\sup_{t\geq 1} \dim_{\mathbb{Q}_p} (E_k/E_{k-1}) \otimes_{\mathbb{Z}_p[[Q^{p^t}]]} \mathbb{Q}_p < \infty. 
$$
By induction on $k$, it follows that for any $1\leq k \leq l$
$$
\sup_{t\geq 1} \dim_{\mathbb{Q}_p} E_k \otimes_{\mathbb{Z}_p[[Q^{p^t}]]}\mathbb{Q}_p < \infty.
$$
Therefore 
$$
\sup_{t\geq 1} \dim_{\mathbb{Q}_p} H_j(N,\mathbb{Z}_p) \otimes_{\mathbb{Z}_p[[Q^{p^t}]]}\mathbb{Q}_p = \sup_{t\geq 1} \dim_{\mathbb{Q}_p} E_l \otimes_{\mathbb{Z}_p[[Q^{p^t}]]}\mathbb{Q}_p < \infty,
$$
for $0\leq j\leq m = d/ c$, as we wanted to prove.

  \begin{flushright} $\square$ \end{flushright}


\end{document}